\newcommand{\reals}{\mathbb{R}}
\newcommand{\complex}{\mathbb{C}}
\newcommand{\integers}{\mathbb{Z}}
\newcommand{\paraa}[1]{\big(#1\big)}
\newcommand{\parab}[1]{\Big(#1\Big)}
\newcommand{\spacearound}[1]{\quad#1\quad}
\newcommand{\equivalent}{\spacearound{\Leftrightarrow}}
\renewcommand{\implies}{\spacearound{\Rightarrow}}
\newtheorem{theorem}{Theorem}[section]
\newtheorem{corollary}[theorem]{Corollary}
\newtheorem{lemma}[theorem]{Lemma}
\newtheorem{proposition}[theorem]{Proposition}
\theoremstyle{definition}
\newtheorem{definition}[theorem]{Definition}
\theoremstyle{remark}
\newtheorem{remark}[theorem]{Remark}
\numberwithin{equation}{section}
\renewcommand{\mid}{\mathds{1}}
\newcommand{\A}{\mathcal{A}}
\renewcommand{\d}{\partial}
\renewcommand{\P}{\mathcal{P}}
\newcommand{\nablab}{\bar{\nabla}}
\newcommand{\nablat}{\tilde{\nabla}}
\newcommand{\Der}{\operatorname{Der}}
\newcommand{\nablasub}[1]{\nabla_{\!#1}}
\newcommand{\nablatsub}[1]{\nablat_{\!#1}}
\newcommand{\nablad}{\nablasub{d}}
\newcommand{\g}{\mathfrak{g}}
\newcommand{\gphi}{\g_{\varphi}}
\newcommand{\Mphi}{M_{\varphi}}
\newcommand{\Z}{\mathcal{Z}}
\newcommand{\ZA}{\Z(\A)}
\newcommand{\half}{\frac{1}{2}}
\newcommand{\Sthreet}{S^3_\theta}
\newcommand{\Sthreetloc}{S^3_{\theta,\textup{loc}}}
\newcommand{\Ws}{W^\ast}
\newcommand{\Zs}{Z^\ast}
\newcommand{\qb}{\bar{q}}
\newcommand{\Wsq}{|W|^2}
\newcommand{\Zsq}{|Z|^2}
\newcommand{\xv}{\vec{x}}
\newcommand{\hhat}{\hat{h}}
\newcommand{\R}[3]{R(\d_{#1},\d_{#2})E_{#3}}
\newcommand{\Ttheta}{T^2_\theta}
\newcommand{\nablah}{\hat{\nabla}}
\title[Riemannian curvature of the noncommutative 3-sphere]{Riemannian curvature of the\\noncommutative 3-sphere}
\author{Joakim Arnlind and Mitsuru Wilson}
\address[Joakim Arnlind]{Department of Mathematics\\
Link\"oping University\\
581 83 Link\"oping\\
Sweden}
\email{joakim.arnlind@liu.se}
\address[Mitsuru Wilson]{%
Middlesex College\\
University of Western Ontario\\
London, Ontario\\
N6A 5B7\\
Canada}
\email{mwils57@uwo.ca}
\subjclass[2000]{}
\keywords{}
\begin{document}

\maketitle

\begin{abstract}
  In order to investigate to what extent the calculus of classical
  (pseudo-)Riemannian manifolds can be extended to a noncommutative
  setting, we introduce pseudo-Riemannian calculi of modules over
  noncommutative algebras. In this framework, it is possible to prove
  an analogue of Levi-Civita's theorem, stating that there exists at
  most one torsion-free and metric connection for a given (metric)
  module, satisfying the requirements of a real metric
  calculus. Furthermore, the corresponding curvature operator has the
  same symmetry properties as the classical Riemannian curvature. As
  our main motivating example, we consider a pseudo-Riemannian
  calculus over the noncommutative 3-sphere and explicitly determine
  the torsion-free and metric connection, as well as the curvature
  operator together with its scalar curvature.
\end{abstract}

\section{Introduction}

\noindent The topological aspects of noncommutative geometry have been
extensively developed over the last decades, and there is a fine
understanding of how geometrical concepts generalize (or not) to the
noncommutative setting. Moreover, via spectral triples and Dirac
operators, metric aspects have also been thoroughly studied
\cite{c:ncgbook}. In particular, a noncommutative connection and curvature formalism is
worked out by A. Connes in \cite{c:cstaralgebre}, and now there is
also an understanding of scalar curvature in terms of heat kernel
expansions for spectral triples.

In recent years, several authors have made progress in computing the
scalar curvature for noncommutative tori, defined as a particular term
in the asymptotic heat kernel expansion, in analogy with classical
Riemannian geometry
\cite{cm:modularCurvature,fk:curvatureFourTori,fk:scalarCurvature,lm:modularCurvature}.
The novelty of this formulation is that it can be used to prove index
type theorems, such as a noncommutative Gauss-Bonnet theorem, which
can be shown to hold for conformal perturbations of the flat metric
for the noncommutative torus \cite{fk:gaussBonnet,ct:gaussBonnet}. The
computations rely on pseudo-differential calculus and are highly
technical and analytical in nature.  These results certainly have a
profound impact in the field of noncommutative geometry. An
interesting question is the existence of a curvature tensor, whose
scalar curvature coincides with the one arising from the heat kernel
expansion.

In an interesting paper by J. Rosenberg \cite{r:leviCivita}, an
algebraic approach to curvature of the noncommutative torus is taken,
much in the spirit of noncommutative differential geometry
\cite{c:cstaralgebre}. It turns out to be possible to construct a
Levi-Civita connection for certain classes of noncommutative tori,
whose curvature tensor and scalar curvature can easily be computed.

In this paper, we try to understand some of the prerequisites for
introducing traditional Riemannian geometry over a noncommutative
algebra, and formalize these ideas in \emph{pseudo-Riemannian calculi}
for which several classical results hold; in particular, there
exists at most one torsion-free and metric connection. Furthermore,
under certain hermiticity assumptions, the curvature tensor has all
the symmetries one finds in the differential geometric
setting. Although our framework is admittedly quite restrictive, and
only a few noncommutative manifolds fulfill the requirements, we
believe that our results contribute to the understanding of
noncommutative Riemannian geometry, by studying particular and well
known examples: the noncommutative torus and, foremost, the
noncommutative 3-sphere \cite{m:ncspheres}. In these examples, there
are natural choices of modules, corresponding to (sections of) tangent
bundles, which presents themselves when considering the manifolds as
embedded in Euclidean space (see
\cite{a:curvatureGeometric,ah:pseudoRiemannian,ahh:multilinear,abhhs:noncommutative}
for similar approaches making use of embeddings in Euclidean
space). Let us also point out that there are several other related
approaches to Riemannian structures in noncommutative geometry; see
e.g. \cite{cff:gravityncgeometry,dvmmm:onCurvature,ac:ncgravitysolutions,bm:starCompatibleConnections}.

For our main example, the noncommutative 3-sphere, we find it
interesting that our computations seem to introduce a type of
noncommutative local tangent bundle, in the following sense (as
discussed in Section \ref{sec:global.vs.local}). In general, the
module of vector fields is not a free module, which impedes to work
with directly from a computational point of view. Therefore, one
usually considers the manifold chart by chart and carry out pointwise
calculations. In noncommutative geometry, points are generally not
accessible, but the fact that the tangent bundle is locally free is
useful. However, since the objects we work with are intrinsically
global, the restriction of vector fields to a chart in the
noncommutative setting is not immediate.  Instead, we extend a local
basis of vector fields to global vector fields and consider the set of
vector fields in the local chart as a free submodule of the tangent
bundle, generated by the globalized vector fields. From a classical
point of view, computations may equally well be done with these vector
fields, keeping in mind that results can only be trusted for points
that belong to the given chart. Furthermore, we introduce an Ore
localization of the noncommutative 3-sphere, which is in direct
analogy with the algebra of functions in the chart provided by the
classical Hopf coordinates.

The paper is organized as follows: In Section \ref{sec:preliminaries}
we introduce a few basic concepts of noncommutative algebra that will
be used throughout the paper, in order to fix our notation. Section
\ref{sec:pseudo.riemannian.caculi} introduces pseudo-Riemannian
calculi, which provides a computational framework for Riemannian
geometry over noncommutative algebras. In Section
\ref{sec:curvature.of.calculus} the symmetry properties of the
curvature of a pseudo-Riemannian calculus is discussed, as well as the
possibility of introducing a scalar curvature. Section
\ref{sec:nc.torus} presents the noncommutative torus in the framework
of pseudo-Riemannian calculi, and we show that a unique (flat)
torsion-free and metric connection exists. In Section
\ref{sec:nc.three.sphere} we introduce the main motivating example for
this paper, the noncommutative 3-sphere. A real pseudo-Riemannian
calculus is constructed, including the unique torsion-free and metric
connection and, furthermore, the scalar curvature is
computed. Finally, in Section \ref{sec:global.vs.local}, we discuss
aspects of noncommutative localization in the context of the noncommutative
3-sphere.

\section{Preliminaries}
\label{sec:preliminaries}

\noindent In this section we shall recall the definitions of a few
basic algebraic objects, in order to set the notation for the rest of
the paper.  In the following, $\A$ will denote a unital $\ast$-algebra (over
$\complex$) with center $\ZA$. The set of derivations of $\A$ (into
$\A$) is denoted by $\Der(\A)$, and
for any derivation $d\in\Der(\A)$, there is a hermitian
conjugate $d^\ast$, given by $d^\ast(a) = \paraa{d(a^\ast)}^\ast$; a
derivation is called hermitian if $d^\ast = d$.

In this paper we shall mainly be concerned with right $\A$-modules. In
particular, the free (right) $\A$-module $(\A)^n$ has a canonical
basis given by $\{e_1,\ldots,e_n\}$ where
\begin{align*}
  e_i = (0,\ldots,0,\mid,0,\ldots,0)
\end{align*}
with the only nonzero element in the $i$'th position. An element
$U\in(\A)^n$ can be written as $U=e_iU^i$ (with an implicit sum over
$i$ from $1$ to $n$) for some (uniquely determined) elements
$U^1,\ldots,U^n\in\A$.

\begin{definition}
  Let $M$ be a right $\A$-module. A map $h:M\times M\to\A$ is called
  a hermitian form on $M$ if
  \begin{align*}
    &h(U,V+W) = h(U,V) + h(U,W)\\
    &h(U,Va) = h(U,V)a\\
    &h(U,V)^\ast = h(V,U).
  \end{align*}
  A hermitian form is \emph{non-degenerate} if $h(U,V)=0$ for all
  $V\in M$ implies that $U=0$. For brevity, we simply refer to a
  non-degenerate hermitian form as a \emph{metric} on $M$. The pair $(M,h)$,
  where $M$ is a right $\A$-module and $h$ is a hermitian form on $M$,
  is called a \emph{(right) hermitian $\A$-module}. If $h$ is a
  metric, we say that $(M,h)$ is a \emph{(right) metric $\A$-module}. 
\end{definition}

\noindent
Let us introduce affine connections on a right $\A$-module, adjusted
to fit the particular setting of this paper.

\begin{definition}
  Let $M$ be a right $\A$-module and let $\g\subseteq\Der(\A)$ be a
  (real) Lie algebra of hermitian derivations. An affine connection on $(M,\g)$ is a
  map $\nabla:\g\times M\to M$ such that
  \begin{enumerate}
  \item $\nablasub{d}(U+V) = \nablasub{d}U+\nablasub{d}V$,
  \item $\nablasub{\lambda d+d'}U = \lambda\nablasub{d}U+\nablasub{d'}U$,
  \item $\nablasub{d}(Ua) = \paraa{\nablasub{d}U}a+Ud(a)$,
  \end{enumerate}
  for all $U,V\in M$, $d,d'\in\g$, $a\in\A$ and $\lambda\in\reals$.
\end{definition}

\begin{remark}
  Note that since we are considering affine connections with respect
  to a \emph{subset} of $\Der(\A)$, it does not make sense in general
  to demand that $\nablasub{cd}U=c\nablasub{d}U$ for (hermitian) $c\in\ZA$, since
  $\g$ may not be closed under the action of $\ZA$. However, in the
  examples we consider it is true that $\nablasub{cd}U=c\nablasub{d}U$
  whenever $cd\in\g$. (In fact, this is a general statement which
  follows from Kozul's formula \eqref{eq:kozul.formula} as soon as
  $\varphi$, in Definition \ref{def:real.metric.calculus}, is linear
  over $\ZA$ in the above sense.)
\end{remark}

\section{Pseudo-Riemannian calculi}
\label{sec:pseudo.riemannian.caculi}

\noindent
In differential geometry, every derivation of $C^\infty(M)$ gives rise
to a (unique) vector field on the manifold $M$. Hence, in the
algebraic definition of a connection, where $\nablasub{d}U$ is defined
for $d\in\Der(C^\infty(M))$ and $U\in TM$, one may swap the two
arguments due to the fact that there is a one-to-one correspondence
between derivations and vector fields. For instance, this makes the
classical definition of torsion meaningful:
\begin{align*}
  T(U,V) = \nabla_UV-\nabla_VU-[U,V],
\end{align*}
from an algebraic point of view.  In a derivation based differential
calculus over a noncommutative algebra (see
e.g. \cite{dv:calculDifferentiel}), the arguments of a connection is
not on equal footing, partly due to the fact that the set of
derivations is in general not a module over the algebra. Thus, there
is no natural way to associate an element of the module to an
arbitrary derivation.

In this paper, we shall investigate the consequences of introducing a
correspondence, which assigns a unique element of a module to every
derivation in a Lie algebra $\mathfrak{g}\subseteq\Der(\A)$. This idea is
formalized in the following definition.

\begin{definition}\label{def:real.metric.calculus}
  Let $(M,h)$ be a (right) metric $\A$-module, let
  $\g\subseteq\Der(\A)$ be a (real) Lie algebra of hermitian
  derivations and let $\varphi:\g\to M$ be a $\reals$-linear
  map. If we denote the pair $(\g,\varphi)$ by $\gphi$, the
  triple $(M,h,\gphi)$ is called a \emph{real metric calculus} if
  \begin{enumerate}
  \item the image $\Mphi=\varphi(\g)$ generates $M$ as an $\A$-module,
  \item
    $h(E,E')^\ast=h(E,E')$
    for all $E,E'\in\Mphi$.
  \end{enumerate}
\end{definition}

\noindent 
The condition that the elements in the image of $\varphi$ have
hermitian inner products, corresponds to the fact that the metric is
real, and that the inner product of two real vector fields, is again a
real function. An important consequence of this assumption is that $h$
is truly symmetric on the image of $\varphi$,
i.e. $h(E,E')=h(E',E)$ for all $E,E'\in\Mphi$; a fact that
will repeatedly be used in the sequel.

In this setting, we shall introduce a connection on a real metric
calculus, and demand that the connection preserve the hermiticity of
$\Mphi$.

\begin{definition}
  Let $(M,h,\gphi)$ be a real metric calculus and let $\nabla$ denote
  an affine connection on $(M,\g)$. If
  \begin{align*}
    h(\nabla_dE,E') = h(\nabla_dE,E')^\ast
  \end{align*}
  for all $E,E'\in\Mphi$ and $d\in \g$ then $(M,h,\gphi,\nabla)$ is
  called a \emph{real connection calculus}.
\end{definition}

\noindent For a real connection calculus it is straightforward to
introduce the concept of a metric and torsion-free connection.

\begin{definition}
  Let $(M,h,\gphi,\nabla)$ be a real connection calculus over $M$. The
  calculus is \emph{metric} if
  \begin{align*}
    d\paraa{h(U,V)} = h\paraa{\nablasub{d}U,V} + h\paraa{U,\nablad V}
  \end{align*}
  for all $d\in\g$, $U,V\in M$, and \emph{torsion-free} if 
  \begin{align*}
    \nablasub{d_1}\varphi(d_2)-\nablasub{d_2}\varphi(d_1)
    -\varphi\paraa{[d_1,d_2]} = 0
  \end{align*}
  for all $d_1,d_2\in \g$. A metric and torsion-free real connection
  calculus over $M$ is called a \emph{pseudo-Riemannian calculus over $M$}. 
\end{definition}

\noindent Pseudo-Riemannian calculi will be the main objects of
interest to us, and they provide a framework in which one may carry
out computations in close analogy with classical Riemannian geometry.

The Levi-Civita theorem states that there is a unique torsion-free and
metric connection on the tangent bundle of a Riemannian manifold. In
the current setting, one can not guarantee the existence, but given a
real metric calculus, there exists at most one connection which is
both metric and torsion-free.

\begin{theorem}
  Let $(M,h,\gphi)$ be a real metric calculus over $M$. Then there exists
  at most one affine connection $\nabla$ on $(M,\g)$, such that
  $(M,h,\gphi,\nabla)$ is a pseudo-Riemannian calculus. 
\end{theorem}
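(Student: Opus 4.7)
The plan is to derive a noncommutative analogue of the Koszul formula which expresses $h(\nablasub{d_1}\varphi(d_2),\varphi(d_3))$ entirely in terms of the data $(M,h,\gphi)$, with no reference to $\nabla$. Once such a formula is in hand, uniqueness follows because $\Mphi$ generates $M$ as a right $\A$-module and $h$ is non-degenerate.

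To derive the formula, I fix $d_1,d_2,d_3\in\g$ and abbreviate $E_i=\varphi(d_i)$. I would write the three metric identities obtained by applying the metric condition to $h(E_2,E_3)$, $h(E_1,E_3)$, and $h(E_1,E_2)$ differentiated by $d_1$, $d_2$, and $d_3$ respectively, and then form the classical combination (first)$+$(second)$-$(third). Substituting the torsion-free relations $\nablasub{d_i}E_j-\nablasub{d_j}E_i=\varphi([d_i,d_j])$ rewrites the right-hand side as $2h(\nablasub{d_1}E_2,E_3)$, plus inner products of the form $\pm h(\varphi([d_i,d_j]),E_k)$ and $\pm h(E_k,\varphi([d_i,d_j]))$, plus a residual pair $h(E_2,\nablasub{d_3}E_1)-h(\nablasub{d_3}E_1,E_2)$.

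The step where the \emph{real} connection calculus hypothesis is essential is showing that this residual pair vanishes. Classically it is killed by symmetry of the metric, but here I would invoke the hermiticity axiom $h(\nablasub{d_3}E_1,E_2)^\ast=h(\nablasub{d_3}E_1,E_2)$ together with the sesquilinearity identity $h(U,V)^\ast=h(V,U)$, which together yield $h(E_2,\nablasub{d_3}E_1)=h(\nablasub{d_3}E_1,E_2)$. The result is a Koszul-type formula of the shape
\begin{align*}
2h\paraa{\nablasub{d_1}E_2,E_3}
&= d_1\paraa{h(E_2,E_3)}+d_2\paraa{h(E_1,E_3)}-d_3\paraa{h(E_1,E_2)}\\
&\quad+h\paraa{\varphi([d_1,d_2]),E_3}-h\paraa{E_2,\varphi([d_1,d_3])}\\
&\quad-h\paraa{E_1,\varphi([d_2,d_3])},
\end{align*}
whose right-hand side depends only on $h$, $\varphi$, and the Lie bracket on $\g$.

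Given two affine connections $\nabla$ and $\nablat$ such that both $(M,h,\gphi,\nabla)$ and $(M,h,\gphi,\nablat)$ are pseudo-Riemannian calculi, this formula forces $h(\nablasub{d_1}E_2-\nablatsub{d_1}E_2,E_3)=0$ for every $d_3\in\g$, i.e.\ for every $E_3\in\Mphi$. Right-$\A$-linearity of $h$ in the second slot, combined with the generating property of $\Mphi$, extends this to $h(\nablasub{d_1}E_2-\nablatsub{d_1}E_2,V)=0$ for every $V\in M$, and non-degeneracy of $h$ then gives $\nablasub{d_1}E_2=\nablatsub{d_1}E_2$ for all $d_1,d_2\in\g$. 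The Leibniz rule (axiom (3) of an affine connection) propagates the equality to arbitrary right-$\A$-linear combinations of elements of $\Mphi$, and hence to all of $M$. The main obstacle I foresee is the bookkeeping in the Koszul manipulation: the hermiticity axiom only applies when the first slot of $h$ has the form $\nablasub{d}E$ with $E\in\Mphi$, so one must take care to eliminate the right occurrences of $\nabla E_i$ via the torsion-free identity so that the residual cross-term lands in precisely that form.
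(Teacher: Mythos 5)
Your proof is correct, but it takes a different route from the paper's own proof of this theorem. The paper argues directly with the difference tensor $\alpha(d,U)=\nablatsub{d}U-\nablasub{d}U$: after checking that $\alpha$ is additive and right $\A$-linear in $U$, it subtracts the two metric conditions to get $h(\alpha(d,U),V)=-h(U,\alpha(d,V))$, uses torsion-freeness to get $\alpha(d_1,\varphi(d_2))=\alpha(d_2,\varphi(d_1))$, and then a six-step cyclic computation shows that $h(\alpha(d_1,E_2),E_3)$ is anti-hermitian, while the realness of both connection calculi makes it hermitian, hence zero; generation of $M$ by $\Mphi$ and non-degeneracy of $h$ finish the argument. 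You instead derive the Koszul formula and read off uniqueness from it; this is exactly the content of Proposition \ref{prop:kozul.formula} and the remark following it, where the paper notes that the formula provides an independent proof of uniqueness. Your manipulations are sound and essentially those of that proposition: the combination of the three metric identities, the torsion-free substitutions, and the use of the realness axiom $h(\nablasub{d_3}E_1,E_2)^\ast=h(\nablasub{d_3}E_1,E_2)$ together with $h(U,V)^\ast=h(V,U)$ to kill the residual pair $h(E_2,\nablasub{d_3}E_1)-h(\nablasub{d_3}E_1,E_2)$; your resulting formula agrees with \eqref{eq:kozul.formula} once one uses the symmetry $h(E,E')=h(E',E)$ on $\Mphi$ and antisymmetry of the Lie bracket. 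The concluding step (equality of the two connections on $\Mphi$, then on all of $M$ via additivity, right $\A$-linearity of $h$, the Leibniz rule and non-degeneracy) is also fine. The trade-off between the two routes: the paper's $\alpha$-argument is shorter and never needs the explicit formula, whereas the Koszul route yields the formula itself, which the paper then reuses constructively for existence in the free-module case (Corollary \ref{cor:connection.from.Kozul}).
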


\begin{proof}
  Assume that there exist two connections $\nabla$ and $\nablat$ such
  that $(M,h,\gphi,\nabla)$ and $(M,h,\gphi,\nablat)$ are pseudo-Riemannian
  calculi. Let us define 
  \begin{align*}
    \alpha(d,U) = \nablatsub{d}U-\nablasub{d}U,
  \end{align*}
  from which it follows that
  \begin{align*}
    \alpha(d,Ua) = \paraa{\nablatsub{d}U}a+Uda
    -\paraa{\nablasub{d}U}a-Uda = \alpha(d,U)a,
  \end{align*}
  for $a\in\A$, as well as
  \begin{align*}
    &\alpha(d_1+\lambda d_2,U) = \alpha(d_1,U)+\lambda\alpha(d_2,U)\\
    &\alpha(d,U+V) = \alpha(d,U) + \alpha(d,V),
  \end{align*}
  for $\lambda\in\reals$. By subtracting the conditions that
  $\nabla,\nablat$ are metric one obtains
  \begin{align}\label{eq:alpha.metric}
    h\paraa{\alpha(d,U),V} = -h\paraa{U,\alpha(d,V)},
  \end{align}
  and the torsion-free condition implies
  \begin{align}\label{eq:alpha.torsion}
    \alpha\paraa{d_1,\varphi(d_2)}
    =\alpha\paraa{d_2,\varphi(d_1)}
  \end{align}
  for $d_1,d_2\in\g$. Finally, requiring that 
  $h(\nablasub{d_1}\varphi(d_2),\varphi(d_3))$ and
  $h(\nablatsub{d_1}\varphi(d_2),\varphi(d_3))$ are hermitian gives
  \begin{align}\label{alpha.hermitian}
    h\paraa{\alpha(d_1,\varphi(d_2)),\varphi(d_3)}^\ast
    =h\paraa{\alpha(d_1,\varphi(d_2)),\varphi(d_3)}.
  \end{align}
  Now, let us make use of \eqref{eq:alpha.metric} and
  \eqref{eq:alpha.torsion} to compute (where $E_a=\varphi(d_a)$)
  \begin{align*}
    h\paraa{\alpha(d_1,E_2),E_3}
    &=h\paraa{\alpha(d_2,E_1),E_3}
    =-h\paraa{E_1,\alpha(d_2,E_3)}
    =-h\paraa{E_1,\alpha(d_3,E_2)}\\
    &=h\paraa{\alpha(d_3,E_1),E_2}
    =h\paraa{\alpha(d_1,E_3),E_2}
    =-h\paraa{E_3,\alpha(d_1,E_2)},
  \end{align*}
  which shows that
  \begin{align*}
    h\paraa{\alpha(d_1,\varphi(d_2)),\varphi(d_3)}^\ast
    =-h\paraa{\alpha(d_1,\varphi(d_2)),\varphi(d_3)}.
  \end{align*}
  Combining this result with \eqref{alpha.hermitian} yields
  \begin{align*}
    h\paraa{\alpha(d_1,\varphi(d_2)),\varphi(d_3)}=0,
  \end{align*}
  for all $d_1,d_2,d_3\in\g$. Since the image of $\varphi$ generates
  $M$ and $h$ is non-degenerate, it follows that $\alpha(d,U)=0$ for
  all $U\in M$ and $d\in\g$, which shows that
  \begin{align*}
    \nablatsub{d}U = \nablasub{d}U
  \end{align*}
  for all $d\in\g$ and $U\in M$.
\end{proof}

\noindent The Levi-Civita connection can be explicitly constructed
with the help of Kozul's formula, which gives the connection as
expressed in terms of the metric tensor. For pseudo-Riemannian
calculi, there is a corresponding statement.  

\begin{proposition}\label{prop:kozul.formula}
  Let $(M,h,\gphi,\nabla)$ be a pseudo-Riemannian calculus and assume that
  $d_1,d_2,d_3\in\g$. Then it holds that
  \begin{equation}\label{eq:kozul.formula}
    \begin{split}
      2h&(\nablasub{d_1}E_2,E_3)=
      d_1h(E_2,E_3)+d_2h(E_1,E_3)-d_3h(E_1,E_2)\\
      &\quad
      -h\paraa{E_1,\varphi([d_2,d_3])}
      +h\paraa{E_2,\varphi([d_3,d_1])}
      +h\paraa{E_3,\varphi([d_1,d_2])},
    \end{split}
  \end{equation}
  where $E_a=\varphi(d_a)$ for $a\in\{1,2,3\}$.
\end{proposition}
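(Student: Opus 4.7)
The plan is to mimic the classical derivation of Koszul's formula. I would begin by expanding each of the three derivative terms on the right-hand side of \eqref{eq:kozul.formula} via the metric condition, which turns $d_i h(E_j,E_k)$ into $h(\nabla_{d_i}E_j,E_k) + h(E_j,\nabla_{d_i}E_k)$. This produces six covariant-derivative terms together with the three bracket terms already present in the statement.

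Next I would apply the torsion-free identity $\nabla_{d_i}E_j - \nabla_{d_j}E_i = \varphi([d_i,d_j])$ to convert $\nabla_{d_2}E_1$, $\nabla_{d_1}E_3$, and $\nabla_{d_2}E_3$ into $\nabla_{d_1}E_2$, $\nabla_{d_3}E_1$, and $\nabla_{d_3}E_2$, respectively, each substitution contributing an extra $\varphi$-of-commutator term. After these substitutions, the two terms $\pm h(E_1,\nabla_{d_3}E_2)$ cancel outright, and two copies of $h(\nabla_{d_1}E_2,E_3)$ appear, giving the desired $2h(\nabla_{d_1}E_2,E_3)$ on the left.

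The residual piece $h(E_2,\nabla_{d_3}E_1) - h(\nabla_{d_3}E_1,E_2)$ must also vanish, and this is precisely where the real connection assumption enters: hermiticity of $h(\nabla_{d_3}E_1,E_2)$ combined with the general property $h(U,V)^\ast = h(V,U)$ forces $h(\nabla_{d_3}E_1,E_2) = h(E_2,\nabla_{d_3}E_1)$. Once this cancellation is made, what remains are three bracket terms, which I would rearrange using the symmetry $h(E,E') = h(E',E)$ on $\Mphi$ (guaranteed by condition (2) of Definition \ref{def:real.metric.calculus}) together with the antisymmetry $\varphi([d_3,d_1]) = -\varphi([d_1,d_3])$, matching the form stated in the proposition.

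I do not expect any serious obstacle; the reality conditions built into a real connection calculus are arranged precisely to make this classical bookkeeping go through symbolically, and the main content of the argument is checking that each cancellation is underwritten by one of the four axioms (metric compatibility, torsion-freeness, hermiticity on $\Mphi$, and hermiticity of $h(\nabla_d E,E')$) of a pseudo-Riemannian calculus.
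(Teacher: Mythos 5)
Your proposal is correct and follows essentially the same route as the paper's proof: expand the three derivative terms via metric compatibility, trade covariant derivatives using torsion-freeness, invoke the realness of the connection to swap arguments in $h$, and finish with the symmetry of $h$ on $\Mphi$ together with antisymmetry of the bracket. The only difference is organizational (you flip the derivatives in the opposite direction and need the real-connection condition just once, where the paper applies it in three places), which does not change the substance of the argument.
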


\begin{proof}
  Using the fact that $\nabla$ is a metric connection gives
  \begin{align}
      &d_1h(E_2,E_3) =
      h\paraa{\nablasub{d_1}E_2,E_3}+h\paraa{E_2,\nablasub{d_1}E_3}\label{eq:mc.ott}\\
      &d_2h(E_3,E_1) =
      h\paraa{\nablasub{d_2}E_3,E_1}+h\paraa{E_3,\nablasub{d_2}E_1}\label{eq:mc.tto}\\
      &d_3h(E_1,E_2) = 
      h\paraa{\nablasub{d_3}E_1,E_2}+h\paraa{E_1,\nablasub{d_3}E_2}\label{eq:mc.tot},
  \end{align}
  and since the connection is torsion-free one obtains
  \begin{align*}
    &h(E_3,\nablasub{d_2}E_1) =
    h(E_3,\nablasub{d_1}E_2)+h\paraa{E_3,\varphi([d_2,d_1])}\\
    &h(\nablasub{d_3}E_1,E_2) =
    h(\nablasub{d_1}E_3,E_2)+h\paraa{\varphi([d_3,d_1]),E_2}\\
    &h(E_1,\nablasub{d_3}E_2) = 
    h(E_1,\nablasub{d_2}E_3) + h\paraa{E_1,\varphi([d_3,d_2])}.
  \end{align*}
  Moreover, using that the connection is real enables us to rewrite
  the above equations in the following form
  \begin{align}
    &h(E_3,\nablasub{d_2}E_1) =
    h(\nablasub{d_1}E_2,E_3)+h\paraa{E_3,\varphi([d_2,d_1])}\label{eq:tf.tto}\\
    &h(\nablasub{d_3}E_1,E_2) =
    h(E_2,\nablasub{d_1}E_3)+h\paraa{\varphi([d_3,d_1]),E_2}\label{eq:tf.tot}\\
    &h(E_1,\nablasub{d_3}E_2) = 
    h(\nablasub{d_2}E_3,E_1) + h\paraa{E_1,\varphi([d_3,d_2])}.\label{eq:tf.ott}    
  \end{align}
  Inserting \eqref{eq:tf.tto} in \eqref{eq:mc.tto} and
  \eqref{eq:tf.tot},~\eqref{eq:tf.ott} in \eqref{eq:mc.tot} gives
  (together with \eqref{eq:mc.ott})
  \begin{align*}
    &h(\nablasub{d_1}E_2,E_3) =
    d_1h(E_2,E_3)-h(E_2,\nablasub{d_1}E_3)\\
    &h(\nablasub{d_1}E_2,E_3) = 
    d_2h(E_3,E_1)-h(\nablasub{d_2}E_3,E_1)-h\paraa{E_3,\varphi([d_2,d_1])}\\
    &0 =
    -d_3h(E_1,E_2)+h(E_2,\nablasub{d_1}E_3)+h\paraa{\varphi([d_3,d_1]),E_2}\\
    &\qquad +h(\nablasub{d_2}E_3,E_1) + h\paraa{E_1,\varphi([d_3,d_2])},
  \end{align*}
  and summing these three equations yields
  \begin{align*}
    2h(\nablasub{d_1}E_2,E_3)&=
    d_1h(E_2,E_3)+d_2h(E_3,E_1)-d_3h(E_1,E_2)\\
    &\quad
    -h\paraa{E_3,\varphi([d_2,d_1])}+h\paraa{\varphi([d_3,d_1]),E_2}
    + h\paraa{E_1,\varphi([d_3,d_2])},
  \end{align*}
  which proves \eqref{eq:kozul.formula}.
\end{proof}

\begin{remark}
  Note that Proposition \ref{prop:kozul.formula} gives an independent
  proof of the fact that the connection is unique, since the hermitian
  form $h$ is assumed to be nondegenerate.
\end{remark}

\noindent Now, let us show that the converse of Proposition
\eqref{eq:kozul.formula} is true; i.e. a connection satisfying
\eqref{eq:kozul.formula} gives a pseudo-Riemannian calculus.

\begin{proposition}\label{prop:Kozul.implies.metric.tf}
  Let $(M,h,\gphi)$ be a real metric calculus, and let $\nabla$ be an
  affine connection on $(M,\g)$ such that Kozul's formula (\ref{eq:kozul.formula})
  holds. Then $(M,h,\gphi,\nabla)$ is a pseudo-Riemannian calculus.
\end{proposition}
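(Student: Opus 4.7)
The plan is to verify each of the three requirements defining a pseudo-Riemannian calculus in turn: that the connection is real (promoting the data to a real connection calculus), that it is torsion-free, and that it is metric. The strategy in each case is to first establish the relevant identity on the generating set $\Mphi$ by manipulating instances of Kozul's formula \eqref{eq:kozul.formula}, and then lift it to all of $M$ using axiom (1) of a real metric calculus, the sesquilinearity of $h$, and non-degeneracy. The order matters: the metric identity cannot be extracted from Kozul's formula alone, so reality must be obtained first and then fed into the metric argument.

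\emph{Reality.} Every term on the right-hand side of \eqref{eq:kozul.formula} is hermitian in $\A$: the scalars $h(E_j,E_k)$ are hermitian by axiom (2) of a real metric calculus; hermitian derivations preserve hermiticity; and $\varphi([d_j,d_k]) \in \Mphi$, so $h(E_i,\varphi([d_j,d_k]))$ is again hermitian by axiom (2). Therefore $h(\nablasub{d_1}E_2,E_3)$ is hermitian for all $E_a \in \Mphi$, which is precisely the real connection calculus condition.

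\emph{Torsion-free.} I would subtract \eqref{eq:kozul.formula} from its $d_1 \leftrightarrow d_2$ counterpart. The derivation terms involving $d_1$ and $d_2$ cancel pairwise, the $d_3 h(E_1,E_2)$ terms cancel using the symmetry $h(E_1,E_2)=h(E_2,E_1)$ on $\Mphi$, and the six bracket terms collapse via antisymmetry of $[\cdot,\cdot]$. The surviving relation reads
\begin{align*}
  h\paraa{\nablasub{d_1}\varphi(d_2)-\nablasub{d_2}\varphi(d_1)-\varphi([d_1,d_2]),\,E_3}=0
\end{align*}
for every $d_3 \in \g$. Right $\A$-linearity of $h$ in its second slot, together with the fact that $\Mphi$ generates $M$, extends this vanishing against every $V \in M$, and non-degeneracy gives the torsion-free identity.

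\emph{Metric compatibility.} Summing \eqref{eq:kozul.formula} with its $d_2 \leftrightarrow d_3$ variant causes all $\varphi$-bracket terms and all $d_2, d_3$-derivation terms to cancel, while the two $d_1$-terms combine via $h(E_2,E_3)=h(E_3,E_2)$ into $2d_1 h(E_2, E_3)$. This yields $d_1 h(E_2,E_3) = h(\nablasub{d_1}E_2,E_3) + h(\nablasub{d_1}E_3,E_2)$. Now reality kicks in: it turns $h(\nablasub{d_1}E_3,E_2)$ into $h(E_2,\nablasub{d_1}E_3)$ by hermiticity of $h$, yielding metric compatibility on $\Mphi$. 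To pass to arbitrary $U,V \in M$, I would write them as finite $\A$-combinations of elements of $\Mphi$ and expand both sides using the Leibniz rule for $\nabla_d$ and for $d$; hermiticity of $d$ provides $d(a)^\ast = d(a^\ast)$, which is precisely what makes the two expansions match term by term. The main obstacle is keeping the adjoint bookkeeping straight in this last extension, but once reality has been established the calculation is essentially formal.
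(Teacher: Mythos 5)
Your proposal is correct and follows essentially the same route as the paper's own proof: reality is read off from the hermiticity of the right-hand side of Kozul's formula, torsion-freeness and metric compatibility on $\Mphi$ are obtained by combining two instances of \eqref{eq:kozul.formula} (using the symmetry of $h$ on $\Mphi$ and, for the metric identity, the just-established reality), and both are then extended to all of $M$ via generation by $\Mphi$, the Leibniz rules with $d(a)^\ast=d(a^\ast)$, and non-degeneracy of $h$. The only cosmetic difference is the order in which the torsion-free and metric properties are verified.
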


\begin{proof}
  From (\ref{eq:kozul.formula}) it follows immediately that
  $h(\nablasub{d_1}\varphi(d_2),\varphi(d_3))$ is hermitian since
  every term in the right hand side is hermitian, due to the fact that
  $(M,h,\gphi)$ is assumed to be a real metric calculus, which implies
  that $(M,h,\gphi,\nabla)$ is a real connection calculus. Next, let
  us show that the connection is metric.

  Let $d_1,d_2,d_3\in\g$ and set $E_a=\varphi(d_a)$. Using eq.
  (\ref{eq:kozul.formula}) twice (together with the fact that
  $(M,h,\gphi)$ is a real metric calculus), gives
  \begin{align*}
    h(\nablasub{d_1}E_2,E_3)+h(E_2,\nablasub{d_1}E_3) = d_1h(E_2,E_3).
  \end{align*}
  Since $\Mphi$ generates $M$, one may find
  $\{E_a=\varphi(d_a)\}_{a=1}^N$ such that one can write $U=E_aU^a$
  for all $U\in M$. It then follows that
  \begin{align*}
    &h(\nablasub{d}U,V) + h(U,\nablasub{d}V) \\
    &= h\paraa{(\nablasub{d}E_a)U^a+E_adU^a,E_bV^b}
    +h\paraa{E_aU^a,(\nablasub{d}E_b)V^b+E_bdV^b}\\
    &=(U^a)^\ast\paraa{h(\nablasub{d}E_a,E_b)+h(E_a,\nablasub{d}E_b)}V^b
    +d(U^a)^\ast h(E_a,E_b)V^b+(U^a)^\ast h(E_a,E_b)dV^b\\
    &= (U^a)^\ast dh(E_a,E_b)V^b
    +d(U^a)^\ast h(E_a,E_b)V^b+(U^a)^\ast h(E_a,E_b)dV^b\\
    &=d\paraa{(U^a)^\ast h(E_a,E_b)V^b}=dh(U,V),
  \end{align*}
  which shows that the affine connection is metric. Finally, let us show that the
  connection is torsion-free. For $d_1,d_2,d_3\in g$, with
  $E_a=\varphi(d_a)$, let us consider
  \begin{align*}
    T=h\paraa{\nablasub{d_1}E_2-\nablasub{d_2}E_1-\varphi([d_1,d_2]),E_3}.
  \end{align*}
  By using formula \eqref{eq:kozul.formula} for the first two
  terms, one obtains
  \begin{align*}
    T &= h\paraa{E_3,\varphi([d_1,d_2])}-h\paraa{\varphi([d_1,d_2]),E_3} = 0.
  \end{align*}
  Since the image of $\varphi$ generates $M$ one can conclude that 
  \begin{align*}
    h\paraa{\nablasub{d_1}E_2-\nablasub{d_2}E_1-\varphi([d_1,d_2]),U}=0
  \end{align*}
  for all $U\in M$, which implies that
  \begin{align*}
    \nablasub{d_1}E_2-\nablasub{d_2}E_1-\varphi([d_1,d_2]) = 0,
  \end{align*}
  since $h$ is nondegenerate.
\end{proof}

\noindent 
In particular examples, it is possible to
use Kozul's formula to construct a metric and torsion-free
connection. One of the cases, which is relevant to our examples, is when
$M$ is a free module.

\begin{corollary}\label{cor:connection.from.Kozul}
  Let $(M,h,\gphi)$ be a real metric calculus and let
  $\{\d_1,\ldots,\d_n\}$ be a basis of $\g$ such that
  $\{E_a=\varphi(\d_a)\}_{a=1}^n$ is a basis for $M$. If there exist
  $U_{ab}\in M$ such that
  \begin{equation}\label{eq:Kozul.Uab}
    \begin{split}
      2h&(U_{ab},E_c)=
      \d_ah(E_b,E_c)+\d_bh(E_a,E_c)-\d_ch(E_a,E_b)\\
      &\quad
      -h\paraa{E_a,\varphi([\d_b,\d_c])}
      +h\paraa{E_b,\varphi([\d_c,\d_a])}
      +h\paraa{E_c,\varphi([\d_a,\d_b])}
    \end{split}
  \end{equation}
  for $a,b,c=1,\ldots,n$, then there exists a connection $\nabla$,
  given by $\nablasub{\d_a}E_b = U_{ab}$, such that
  $(M,h,\gphi,\nabla)$ is a pseudo-Riemannian calculus.
\end{corollary}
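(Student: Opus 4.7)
The plan is to define $\nabla$ by prescription on the chosen bases, extend it to an affine connection on $(M,\g)$, observe that the hypothesis implies the full Kozul identity \eqref{eq:kozul.formula} on all of $\g^3$ by $\reals$-trilinearity, and then invoke Proposition \ref{prop:Kozul.implies.metric.tf} to conclude.

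First, I would write a generic $d\in\g$ uniquely as $d=\lambda^a\d_a$ with $\lambda^a\in\reals$, and a generic $U\in M$ uniquely as $U=E_b u^b$ with $u^b\in\A$, and then force the affine connection axioms by setting
\[
\nablasub{d}U = \lambda^a\paraa{U_{ab}u^b + E_b\d_a(u^b)}.
\]
This is the only definition compatible with $\nablasub{\d_a}E_b=U_{ab}$ and the three axioms of an affine connection, so uniqueness of the extension is immediate; the routine check that the axioms actually hold reduces to the Leibniz rule for the derivations $\d_a$ and the uniqueness of the basis expansions.

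The main step is then to upgrade the hypothesis \eqref{eq:Kozul.Uab}, which is stated only for basis triples $(\d_a,\d_b,\d_c)$, to the full Kozul formula \eqref{eq:kozul.formula} for arbitrary $(d_1,d_2,d_3)\in\g^3$. This follows by $\reals$-trilinearity of both sides: on the left, $\nablasub{d_1}\varphi(d_2)$ is $\reals$-linear in $d_1$ by connection axiom (2), and $\reals$-linear in $d_2$ because $\varphi$ is $\reals$-linear and $\nablasub{d}(E\lambda)=\lambda\nablasub{d}E$ for $\lambda\in\reals$ (since $d(\lambda)=0$); linearity in $d_3$ comes from linearity of $h$ in its second slot. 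On the right, derivations are $\reals$-linear maps, $\varphi$ is $\reals$-linear, the Lie bracket is $\reals$-bilinear, and $h$ is $\reals$-bilinear since real scalars are self-adjoint. Expanding $d_i=\lambda_i^{a_i}\d_{a_i}$ and invoking \eqref{eq:Kozul.Uab} term by term then yields \eqref{eq:kozul.formula} for all triples.

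With \eqref{eq:kozul.formula} in hand, Proposition \ref{prop:Kozul.implies.metric.tf} immediately gives that $(M,h,\gphi,\nabla)$ is a pseudo-Riemannian calculus. The only real work is the trilinearity bookkeeping, which is tedious but routine; no obstacle of substance arises, because nondegeneracy of $h$ is not needed at this stage (it only governs uniqueness) and the existence of the $U_{ab}$ is built into the hypothesis.
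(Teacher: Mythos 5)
Your proposal is correct and follows essentially the same route as the paper: define $\nablasub{\d_a}E_b=U_{ab}$, extend by $\reals$-linearity in $\g$ and by the Leibniz rule through the unique basis expansion $U=E_aU^a$, and then conclude via Proposition \ref{prop:Kozul.implies.metric.tf}. The only difference is that you spell out the $\reals$-trilinearity argument upgrading \eqref{eq:Kozul.Uab} from basis triples to the full Kozul formula \eqref{eq:kozul.formula}, a step the paper leaves implicit.
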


\begin{proof}
  Assuming that such elements $U_{ab}\in M$ exist, define
  \begin{align*}
    \nablasub{\d_a}E_b = U_{ab}
  \end{align*}
  and extend $\nabla$ to $\g$ by linearity. Since $\{E_a\}_{a=1}^n$
  is a basis of $M$, every element $U\in M$ has a unique expression
  $U=E_aU^a$, and we extend $\nabla$ to $M$ through linearity and Leibniz'
  rule
  \begin{align*}
    \nablasub{d}U =\paraa{\nablasub{d}E_a}U^a + E_ad(U^a), 
  \end{align*}
  which then defines an affine connection on $(M,\g)$. From Proposition
  \ref{prop:Kozul.implies.metric.tf} it follows that
  $(M,h,\gphi,\nabla)$ is a pseudo-Riemannian calculus.
\end{proof}

\section{Curvature of pseudo-Riemannian calculi}
\label{sec:curvature.of.calculus}

\noindent 
In this section we will study symmetries of the
curvature tensor of a pseudo-Riemannian calculus, as well as
introduce an associated scalar curvature, in case it
exists. It turns out that in order to recover the full symmetry
(compared to the classical setting) of the curvature tensor,
one needs an extra assumption of hermiticity. Namely, although a real
connection calculus satisfies the requirement that
$h(\nablasub{d_1}E_1,E_2)$ is hermitian, there is no guarantee that
$h(\nablasub{d_1}\nablasub{d_2}E_1,E_2)$ is hermitian. However, with
this extra assumption, one may prove that all the familiar symmetries
of the curvature tensor hold
(cf. Proposition~\ref{prop:curvature.symmetries}). Pseudo-Riemannian
calculi fulfilling this extra condition will appear often in what
follows, and therefore we make the following definition.

\begin{definition}\label{def:real.pseudo.Riemannian.calculus}
  A pseudo-Riemannian calculus $(M,h,\gphi,\nabla)$ is said to be
  \emph{real} if $h(\nablasub{d_1}\nablasub{d_2}E_1,E_2)$ is hermitian
  for all $d_1,d_2\in\g$ and $E_1,E_2\in\Mphi$.
\end{definition}

\noindent
For later convenience, let us provide a slight reformulation of the
condition in the definition above.

\begin{lemma}\label{lemma:equiv.secnd.symmetry}
  Let $(M,h,\gphi,\nabla)$ be a pseudo-Riemannian calculus. Then the
  following statements are equivalent
  \begin{enumerate}
  \item $h(\nablasub{d_1}\nablasub{d_2}E_1,E_2)$ is hermitian
    for all $d_1,d_2\in\g$ and $E_1,E_2\in\Mphi$,
  \item $h(\nablasub{d_1}E_1,\nablasub{d_2}E_2)$ is hermitian
    for all $d_1,d_2\in\g$ and $E_1,E_2\in\Mphi$.
  \end{enumerate}
\end{lemma}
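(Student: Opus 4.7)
The plan is straightforward: use the metric property of $\nabla$ together with the real connection calculus hypothesis to put both quantities on the same footing, and then read off the equivalence.

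The starting point is the metric condition applied to $h(\nabla_{d_2}E_1, E_2)$ with the derivation $d_1$:
\begin{align*}
d_1 h(\nabla_{d_2}E_1, E_2) = h(\nabla_{d_1}\nabla_{d_2}E_1, E_2) + h(\nabla_{d_2}E_1, \nabla_{d_1}E_2).
\end{align*}
Because $(M,h,\gphi,\nabla)$ is in particular a real connection calculus, the element $h(\nabla_{d_2}E_1, E_2)\in\A$ is hermitian. Since $d_1\in\g$ is a hermitian derivation, it sends hermitian elements to hermitian elements (from $d^\ast(a)=(d(a^\ast))^\ast$ with $a^\ast=a$). Hence the left-hand side is hermitian, which forces the sum on the right to be hermitian as well.

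From this single identity the equivalence is immediate. Assuming (1), the first summand on the right is hermitian, so the second summand $h(\nabla_{d_2}E_1, \nabla_{d_1}E_2)$ is hermitian; as $d_1,d_2$ range independently over $\g$ (and $E_1,E_2$ over $\Mphi$), relabeling yields statement (2). Conversely, assuming (2), relabeling gives that $h(\nabla_{d_2}E_1, \nabla_{d_1}E_2)$ is hermitian, so the identity forces $h(\nabla_{d_1}\nabla_{d_2}E_1, E_2)$ to be hermitian, which is statement (1).

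There is essentially no obstacle: the only point requiring any care is the observation that hermitian derivations preserve hermiticity of algebra elements, and after that the proof is a one-line application of the Leibniz rule $d(h(U,V)) = h(\nabla_d U,V)+h(U,\nabla_d V)$ combined with the real connection calculus assumption.
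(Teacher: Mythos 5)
Your proof is correct and is essentially the paper's argument: the paper likewise differentiates $h(\nabla_{d_1}E_1,E_2)$ by the hermitian derivation $d_2$, uses the metric condition to split it into the two terms in question, and notes that hermiticity of one summand forces hermiticity of the other. The only difference is a harmless relabeling of the derivations.
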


\begin{proof}
  Since the connection is metric, one may write
  \begin{align*}
    d_2h(\nablasub{d_1}E_1,E_2) =
    h(\nablasub{d_2}\nablasub{d_1}E_1,E_2)+h(\nablasub{d_1}E_1,\nablasub{d_2}E_2)
  \end{align*}
  Now, $d_2h(\nablasub{d_1}E_1,E_2)$ is hermitian (since $\nabla$ is
  real and $d_2$ is hermitian), and it follows that if one of
  $h(\nablasub{d_2}\nablasub{d_1}E_1,E_2)$ and
  $h(\nablasub{d_1}E_1,\nablasub{d_2}E_2)$ is hermitian, then the
  other one is hermitian too (since it is then a sum of two hermitian elements).
\end{proof}

\noindent
In a pseudo-Riemannian calculus $(M,h,\gphi,\nabla)$, one
introduces the curvature operator in a standard way as
\begin{align*}
  R(d_1,d_2)U = \nablasub{d_1}\nablasub{d_2}U-
  \nablasub{d_2}\nablasub{d_1}U-\nabla_{[d_1,d_2]}U
\end{align*}
for $d_1,d_2\in\g$ and $U\in M$.
The operator $R(d_1,d_2)$ has a trivial antisymmetry when exchanging
its arguments $d_1,d_2$ and, furthermore, due to the torsion-free condition, the
first Bianchi identity holds.

\begin{proposition}\label{prop:curvature.symmetries.first}
  Let $(M,h,\gphi,\nabla)$ be a pseudo-Riemannian calculus with
  curvature operator $R$. Then
  \begin{enumerate}
  \item\label{Rf.sym.trivial}
    $h(U,R(d_1,d_2)V) = -h(U,R(d_2,d_1)V)$
  \item\label{Rf.sym.bianchi}
    $R(d_1,d_2)\varphi(d_3)+R(d_2,d_3)\varphi(d_1)+R(d_3,d_1)\varphi(d_2)=0$,
  \end{enumerate}
  for $U,V\in M$ and $d_1,d_2,d_3\in\g$.
\end{proposition}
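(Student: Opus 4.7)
The plan is to treat the two items separately, since they have quite different flavors. For (\ref{Rf.sym.trivial}), this is essentially a formal identity that does not use the torsion-free or metric conditions. Since $\nabla_{\!d}U$ is $\reals$-linear in $d$, we have $\nabla_{[d_2,d_1]}U = -\nabla_{[d_1,d_2]}U$, and swapping $d_1,d_2$ in the formula
\begin{align*}
R(d_1,d_2)V = \nablasub{d_1}\nablasub{d_2}V-\nablasub{d_2}\nablasub{d_1}V-\nabla_{[d_1,d_2]}V
\end{align*}
immediately gives $R(d_2,d_1)V=-R(d_1,d_2)V$; pairing with $U$ through $h$ yields the claimed sign, with no subtlety regarding the order of arguments of $h$.

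For (\ref{Rf.sym.bianchi}), I would work directly with the three cyclic terms and use the torsion-free identity $\nablasub{d_i}\varphi(d_j)=\nablasub{d_j}\varphi(d_i)+\varphi([d_i,d_j])$ as the single structural input. Concretely, for each of the three curvature terms, rewrite the inner expression $\nablasub{d_b}\varphi(d_c)$ using torsion-freeness to switch the roles of the bottom and argument derivations, producing a piece of the form $\nablasub{d_c}\varphi(d_b)$ plus a $\nablasub{d_a}\varphi([d_b,d_c])$ term. After this substitution, the ``second-derivative'' contributions pair up: the cyclic sum of $\nablasub{d_1}\nablasub{d_2}\varphi(d_3)-\nablasub{d_2}\nablasub{d_1}\varphi(d_3)$ reorganizes into a cyclic sum of $\nablasub{d_1}\varphi([d_2,d_3])$-type terms.

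Meanwhile, I would simplify $\nabla_{[d_1,d_2]}\varphi(d_3)$ using torsion-freeness once more as $\nablasub{d_3}\varphi([d_1,d_2])+\varphi([[d_1,d_2],d_3])$. Summed cyclically, the $\varphi([[\cdot,\cdot],\cdot])$ contributions vanish by the Jacobi identity in $\g$, and the remaining $\nablasub{d_3}\varphi([d_1,d_2])$ terms, after cycling, coincide with the $\nablasub{d_1}\varphi([d_2,d_3])$ terms produced above and cancel them exactly.

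The identities in question are purely at the level of elements of $M$, so I do not need to pair with $h$ or use non-degeneracy; the only ingredients are linearity of $\nabla$ in the derivation, the torsion-free condition supplied by the pseudo-Riemannian assumption, and Jacobi in $\g$. The main bookkeeping obstacle is keeping signs and cyclic permutations straight; once the substitutions are made consistently on all three cyclic terms, the cancellation is automatic, so I would simply carry out the cyclic rewriting and group terms carefully rather than try a slicker derivation.
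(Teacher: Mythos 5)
Your proposal is correct and follows essentially the same route as the paper: part (1) is immediate from the definition of $R$, and part (2) uses the torsion-free identity twice (once to turn the cyclically grouped second-derivative terms into $\nablasub{d_a}\varphi([d_b,d_c])$-type terms, once on $\nabla_{[d_a,d_b]}\varphi(d_c)$) together with the Jacobi identity and the linearity of $\varphi$. The only difference is cosmetic bookkeeping in how the cancelling terms are paired.
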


\begin{proof}
  Property (\ref{Rf.sym.trivial}) follows immediately from the
  definition of the curvature operator.
  To prove (\ref{Rf.sym.bianchi}), one uses the torsion free condition
  twice (set $E_a=\varphi(d_a)$):
  \begin{align*}
    R&(d_1,d_2)E_3 + R(d_2,d_3)E_1 + R(d_3,d_1)E_2 \\
    &= \nablasub{d_1}\paraa{\nablasub{d_2}E_3-\nablasub{d_3}E_2}
    +\nablasub{d_2}\paraa{\nablasub{d_3}E_1-\nablasub{d_1}E_3}
    +\nablasub{d_3}\paraa{\nablasub{d_1}E_2-\nablasub{d_2}E_1}\\
    &\quad - \nablasub{[d_1,d_2]}E_3
    -\nablasub{[d_2,d_3]}E_1-\nablasub{[d_3,d_1]}E_2\\
    &= \nablasub{d_1}\varphi([d_2,d_3])
    +\nablasub{d_2}\varphi([d_3,d_1])+\nablasub{d_3}\varphi([d_1,d_2])\\
    &\quad - \nablasub{[d_1,d_2]}E_3
    -\nablasub{[d_2,d_3]}E_1-\nablasub{[d_3,d_1]}E_2\\
    &=\varphi\paraa{[d_1,[d_2,d_3]]}+\varphi\paraa{[d_2,[d_3,d_1]]}
    +\varphi\paraa{[d_3,[d_1,d_2]]} = 0,
  \end{align*}
  where the last equality follows from the Jacobi identity, and the
  fact that $\varphi$ is a linear map.
\end{proof}

\noindent
As already mentioned, the full symmetry of the curvature operator is
recovered in the case of \emph{real} pseudo-Riemannian calculi. This
is stated in Proposition \ref{prop:curvature.symmetries}, and in the
proof we shall need the following short lemma.

\begin{lemma}\label{lemma:h.E.nabla.E}
  If $(M,h,\gphi,\nabla)$ is a pseudo-Riemannian calculus, then
  \begin{align*}
    d\paraa{h(E,E)} = 2h(E,\nablasub{d}E)
  \end{align*}
  for all $d\in\g$ and $E\in\Mphi$.
\end{lemma}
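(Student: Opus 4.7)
The plan is to unpack the definitions: \emph{metric} gives the Leibniz identity for $d\bigl(h(E,E)\bigr)$, and \emph{real connection calculus} together with the hermitian form axiom forces the two resulting terms to coincide.

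More concretely, first I would apply the metric condition (from the definition of a pseudo-Riemannian calculus) with $U=V=E$ to write
\begin{align*}
d\bigl(h(E,E)\bigr) = h(\nablasub{d}E,E) + h(E,\nablasub{d}E).
\end{align*}
Next, since $(M,h,\gphi,\nabla)$ is in particular a real connection calculus and $E,E\in\Mphi$, the element $h(\nablasub{d}E,E)$ is hermitian, so
\begin{align*}
h(\nablasub{d}E,E) = h(\nablasub{d}E,E)^\ast = h(E,\nablasub{d}E),
\end{align*}
where the second equality uses the defining property $h(U,V)^\ast = h(V,U)$ of a hermitian form. Substituting back gives $d\bigl(h(E,E)\bigr) = 2h(E,\nablasub{d}E)$.

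There is essentially no obstacle here: the statement is a direct three-line consequence of (i) the metric compatibility of $\nabla$, (ii) the real-connection-calculus hermiticity condition for pairs from $\Mphi$, and (iii) the conjugation symmetry of $h$. The only thing to be careful about is not to confuse $h(\nablasub{d}E,E)$ with $h(\nablasub{d}E,E')$ for distinct $E'$; the argument works precisely because both slots are filled by the same element of $\Mphi$, so the hermiticity hypothesis of Definition of a real connection calculus applies.
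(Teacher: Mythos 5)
Your proposal is correct and follows essentially the same route as the paper: apply metric compatibility with both slots equal to $E\in\Mphi$, then use the real-connection-calculus hermiticity of $h(\nablasub{d}E,E)$ together with $h(U,V)^\ast=h(V,U)$ to identify the two terms. You merely spell out explicitly the step the paper summarizes as ``as $\nabla$ is real, $h(E,\nablasub{d}E)=h(\nablasub{d}E,E)$''.
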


\begin{proof}
  Since $\nabla$ is a metric connection 
  \begin{align*}
    d\paraa{h(E,E)} = h\paraa{\nablasub{d}E,E}+h\paraa{E,\nablasub{d}E},
  \end{align*}
  and, as $\nabla$ is real, it follows that
  $h(E,\nablasub{d}E)=h(\nablasub{d}E,E)$, which implies that
  \begin{align*}
    d\paraa{h(E,E)} = 2h(E,\nablasub{d}E)
  \end{align*}
  for all $E\in\Mphi$ and $d\in\g$.
\end{proof}

\noindent 
Note that, for the sake of completeness, the results of
Proposition~\ref{prop:curvature.symmetries.first} are repeated in the
formulation below.

\begin{proposition}\label{prop:curvature.symmetries}
  Let $(M,h,\gphi,\nabla)$ be a real pseudo-Riemannian calculus, with
  curvature operator $R$. Then
  \begin{enumerate}
  \renewcommand{\theenumi}{\textup{(\alph{enumi})}}%
  \renewcommand{\labelenumi}{\theenumi}%
  \item \label{R.sym.trivial} 
    $h(U,R(d_1,d_2)V)=-h(U,R(d_2,d_1)V)$,
  \item \label{R.sym.interchange} 
    $h(E_1,R(d_1,d_2)E_2)=-h(E_2,R(d_1,d_2)E_1)$,
  \item \label{R.sym.bianchi} 
    $R(d_1,d_2)\varphi(d_3)+R(d_2,d_3)\varphi(d_1)+R(d_3,d_1)\varphi(d_2)=0$,
  \item \label{R.sym.interchange.pairs} 
    $h\paraa{\varphi(d_1),R(d_3,d_4)\varphi(d_2)}=
    h\paraa{\varphi(d_3),R(d_1,d_2)\varphi(d_4)}$,
  \end{enumerate}
  for all $U,V\in M$, $E_1,E_2\in\Mphi$ and $d_1,d_2,d_3,d_4\in\g$.
\end{proposition}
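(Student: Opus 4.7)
Parts (a) and (c) are already established by Proposition \ref{prop:curvature.symmetries.first}, so the plan is to produce the two extra symmetries (b) and (d) that are unlocked by the reality condition of Definition \ref{def:real.pseudo.Riemannian.calculus}.

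For (b), I would first prove the ``diagonal'' case $h(E, R(d_1,d_2) E) = 0$ for every $E \in \Mphi$ and then polarize. Applying Lemma \ref{lemma:h.E.nabla.E} gives $d_1 d_2 h(E,E) = 2 d_1 h(E, \nablasub{d_2} E)$, and expanding the outer $d_1$ via the metric property of $\nabla$ yields
\begin{align*}
2 h(E, \nablasub{d_1} \nablasub{d_2} E) = d_1 d_2 h(E, E) - 2 h(\nablasub{d_1} E, \nablasub{d_2} E).
\end{align*}
Subtracting the same identity with $d_1 \leftrightarrow d_2$, and also subtracting $2 h(E, \nablasub{[d_1,d_2]} E) = [d_1, d_2] h(E,E)$ (Lemma \ref{lemma:h.E.nabla.E} applied to $[d_1,d_2]$), the scalar derivative terms cancel and what remains is
\begin{align*}
2 h(E, R(d_1, d_2) E) = 2 h(\nablasub{d_2} E, \nablasub{d_1} E) - 2 h(\nablasub{d_1} E, \nablasub{d_2} E).
\end{align*}
At this point the reality hypothesis enters via Lemma \ref{lemma:equiv.secnd.symmetry}: the element $h(\nablasub{d_1} E, \nablasub{d_2} E)$ is hermitian, and combined with $h(U,V)^\ast = h(V,U)$ this forces the right-hand side to vanish. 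Polarization, using that $\varphi$ is $\reals$-linear and hence $E_1 + E_2 \in \Mphi$ for $E_1,E_2 \in \Mphi$, then turns the diagonal identity into (b) by expanding $h\paraa{E_1 + E_2, R(d_1,d_2)(E_1+E_2)} = 0$.

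For (d), I would invoke the classical cyclic-sum argument from Riemannian geometry. Writing the Bianchi identity (c) four times, once for each cyclic rotation of $(d_1,d_2,d_3,d_4)$ and paired against the remaining vector through $h$, produces four scalar identities; summing them and collapsing using the antisymmetries (a) and (b) together with the hermiticity of $h$ on $\Mphi$ (so that the arguments of $h$ can be flipped freely) eliminates all but two terms and delivers (d). This is routine combinatorial bookkeeping I would not spell out. The real content of the proposition is step (b), and the main subtlety there is that one needs the reality condition in the form ``$h(\nablasub{d_1}E,\nablasub{d_2}E)$ is hermitian'' rather than in its defining form---precisely the equivalence supplied by Lemma \ref{lemma:equiv.secnd.symmetry}, without which the diagonal argument does not close.
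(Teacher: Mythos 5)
Your proposal is correct and follows essentially the same route as the paper: for (b) it reduces to the diagonal identity $h(E,R(d_1,d_2)E)=0$ using the metric property together with Lemma~\ref{lemma:h.E.nabla.E} and the reality condition in the equivalent form of Lemma~\ref{lemma:equiv.secnd.symmetry}, then polarizes, and for (d) it uses the standard four-fold cyclic Bianchi sum collapsed via (a) and (b). The only differences are cosmetic (the order in which the two lemmas are applied in (b), and the paper leaves the polarization implicit), so there is nothing substantive to add.
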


\begin{proof}
  Properties \ref{R.sym.trivial} and \ref{R.sym.bianchi} are contained
  in the statement of Proposition
  \ref{prop:curvature.symmetries.first}, which is valid for an
  arbitrary pseudo-Riemannian calculus.  Let now show that
  \ref{R.sym.interchange} holds, by proving that $h(E,R(d_1,d_2)E)=0$
  for all $E\in\Mphi$. By using the fact that $\nabla$ is metric, one
  computes
  \begin{align*}
    h\paraa{E,R(d_1,d_2)E} &=
    h\paraa{E,\nablasub{d_1}\nablasub{d_2}E-\nablasub{d_2}\nablasub{d_1}E-\nabla_{[d_1,d_2]}E} \\
    &=d_1h(E,\nablasub{d_2}E)-d_2h(E,\nablasub{d_1}E)-h(E,\nabla_{[d_1,d_2]}E),
  \end{align*}
  using the result in Lemma~\ref{lemma:equiv.secnd.symmetry} (and the
  fact that the pseudo-Riemannian calculus is assumed to be real). Next,
  it follows from Lemma~\ref{lemma:h.E.nabla.E} that
  \begin{align*}
    h\paraa{E,R(d_1,d_2)E} &=
    \half d_1d_2h(E,E)-\half d_2d_1h(E,E)-\half[d_1,d_2]h(E,E)=0.
  \end{align*}
  Finally, we prove \ref{R.sym.interchange.pairs} by using
  \ref{R.sym.bianchi} to write (again, $E_a=\varphi(d_a)$)
  \begin{align*}
    0 = &h\paraa{E_1,R(d_2,d_3)E_4+R(d_3,d_4)E_2+R(d_4,d_2)E_3}\\
    &+h\paraa{E_2,R(d_3,d_4)E_1+R(d_4,d_1)E_3+R(d_1,d_3)E_4}\\
    &+h\paraa{E_3,R(d_4,d_1)E_2+R(d_1,d_2)E_4+R(d_2,d_4)E_1}\\
    &+h\paraa{E_4,R(d_1,d_2)E_3+R(d_2,d_3)E_1+R(d_3,d_1)E_2}\\
    &= 2h(E_1,R(d_4,d_2)E_3) + 2h(E_2,R(d_1,d_3)E_4),
  \end{align*}
  by using \ref{R.sym.interchange} and \ref{R.sym.trivial}. Consequently, by
  using \ref{R.sym.trivial} once more, relation
  \ref{R.sym.interchange.pairs} follows.
\end{proof}

\subsection{Scalar curvature}

Let $(M,h,\gphi,\nabla)$ be a real pseudo-Riemannian calculus, and let
$\{\d_1,\ldots,\d_n\}$ be a basis of $\g$. Setting $E_a=\varphi(\d_a)$
one introduces the components of the metric and the curvature tensor
relative to this basis via
\begin{align*}
  h_{ab} &= h(E_a,E_b)\\
  R_{abpq} &= h\paraa{E_a,R(\d_p,\d_q)E_b},
\end{align*}
and we note that
$(R_{abpq})^\ast=R_{abpq}$ (using the fact that the pseudo-Riemannian
calculus is real). Proposition~\ref{prop:curvature.symmetries}
implies that
\begin{align}
  &R_{abpq}=-R_{abqp},\label{eq:R.sym.comp.trivial}\\
  &R_{abpq} = -R_{bapq},\label{eq:R.sym.comp.interchange}\\
  &R_{abpq}=R_{pqab},\label{eq:R.sym.comp.interchange.pairs}\\
  &R_{apqr}+R_{aqrp}+R_{arpq}=0.\label{eq:R.sym.comp.bianchi}
\end{align}
In the traditional definition of scalar curvature
$S=h^{ab}h^{pq}R_{apbq}$ one makes use of the inverse of the metric to
contract indices of the curvature tensor.  For an arbitrary algebra,
the metric $h_{ab}$ may fail to be invertible; i.e., there does not
exist $h^{ab}$ such that $h^{ab}h_{bc}=\delta^a_c\mid$. However, one
might be in the situation where there exist $H\in\A$ and $\hhat^{ab}$
such that
\begin{align*}
  \hhat^{ab}h_{bc} =h_{cb}\hhat^{ba} = \delta^a_cH.
\end{align*}
If $H$ is hermitian and regular (i.e. not a zero divisor), then we
say that $h_{ab}$ has a pseudo-inverse $(\hhat^{ab},H)$. 

\begin{lemma}\label{lemma:hhat.properties}
  If $(\hhat^{ab},H)$ and $(\hat{g}^{ab},G)$ are pseudo-inverses for $h_{ab}$
  then
  \begin{enumerate}
  \item if $G=H$ then $\hat{g}^{ab}=\hhat^{ab}$,\label{pseudo.inv.GeqH}
  \item $[h_{ab},H]=[\hhat^{ab},H]=0$,\label{pseudo.inv.h.com.H}
  \item $\paraa{\hhat^{ab}}^\ast = \hhat^{ba}$,\label{pseudo.inv.hhat.sym}
  \item $\hat{g}^{ab}H=G\hhat^{ab}$ and $H\hat{g}^{ab}=\hhat^{ab}G$,\label{pseudo.inv.Hg.com}
  \item if $[H,\hat{g}^{ab}]=0$ then $[G,\hhat^{ab}]=[H,G]=0$.\label{pseudo.inv.HG.com}
  \end{enumerate}
\end{lemma}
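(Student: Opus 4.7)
The common engine for parts (\ref{pseudo.inv.h.com.H})--(\ref{pseudo.inv.Hg.com}) is to exploit both forms of the defining identity: in addition to $\hhat^{ab}h_{bc}=\delta^a_c H$, relabelling in $h_{cb}\hhat^{ba}=\delta^a_c H$ yields $h_{bc}\hhat^{cd}=\delta^d_b H$. Any triple product with $h_{bc}$ in the middle can therefore be evaluated by associating either on the left or on the right, and comparing the results while cancelling the regular element $H$ (or $G$) delivers the desired identities.

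I would first treat (\ref{pseudo.inv.h.com.H}) by applying this to $\hhat^{ab}h_{bc}\hhat^{cd}$, which equals $H\hhat^{ad}$ if associated on the left and $\hhat^{ad}H$ if associated on the right, proving $[H,\hhat^{ab}]=0$; the analogous computation on $h_{ab}\hhat^{bc}h_{cd}$ gives $[H,h_{ab}]=0$. For (\ref{pseudo.inv.Hg.com}), the same device applied to $\hhat^{ab}h_{bc}\hat{g}^{cd}$ produces $H\hat{g}^{ad}=\hhat^{ad}G$, and applied to $\hat{g}^{ab}h_{bc}\hhat^{cd}$ produces $G\hhat^{ad}=\hat{g}^{ad}H$, which are the two asserted relations. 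For (\ref{pseudo.inv.hhat.sym}) I would instead apply $\ast$ to $\hhat^{ab}h_{bc}=\delta^a_c H$: the hermitian-form axiom gives $h_{bc}^\ast=h_{cb}$ and $H^\ast=H$, hence $h_{cb}(\hhat^{ab})^\ast=\delta^a_c H=h_{cb}\hhat^{ba}$; left-multiplying by $\hhat^{dc}$ and contracting yields $H\bigl[(\hhat^{ab})^\ast-\hhat^{ba}\bigr]=0$, and regularity of $H$ concludes.

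The remaining items then follow from (\ref{pseudo.inv.Hg.com}). For (\ref{pseudo.inv.GeqH}), setting $G=H$ in $H\hat{g}^{ab}=\hhat^{ab}G$ and invoking $[H,\hhat^{ab}]=0$ gives $H(\hat{g}^{ab}-\hhat^{ab})=0$, and regularity yields the equality. For (\ref{pseudo.inv.HG.com}), under the hypothesis $[H,\hat{g}^{ab}]=0$, the chain $\hhat^{ab}G=H\hat{g}^{ab}=\hat{g}^{ab}H=G\hhat^{ab}$ (the middle equality being the hypothesis and the outer two coming from (\ref{pseudo.inv.Hg.com})) proves $[G,\hhat^{ab}]=0$; right-multiplying this by $h_{bc}$ and using $[G,h_{bc}]=0$, which is (\ref{pseudo.inv.h.com.H}) applied to the pseudo-inverse $(\hat{g}^{ab},G)$, gives $\delta^a_c GH=\delta^a_c HG$, hence $[H,G]=0$. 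There is no serious obstacle here: the entire calculation is bookkeeping, with regularity of the scalars $H,G$ providing the only nontrivial cancellation and hermiticity of $H$ entering only in (\ref{pseudo.inv.hhat.sym}) through $H^\ast=H$.
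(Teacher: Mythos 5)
Your proposal is correct and runs on essentially the same engine as the paper's proof: evaluating triple products such as $\hat{g}^{ab}h_{bc}\hhat^{cd}$ (and $h\hhat h$, $\hhat h\hhat$) by associating two ways, then cancelling the regular elements $H$, $G$, with the adjoint of the defining relations and $H^\ast=H$ handling the symmetry $(\hhat^{ab})^\ast=\hhat^{ba}$. The only differences are cosmetic reorderings — you obtain (1) from (4) and (2) rather than directly, and prove (3) by direct cancellation with $H$ regular instead of invoking the uniqueness statement (1) as the paper does — so the underlying computations coincide.
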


\begin{proof}
  To prove (\ref{pseudo.inv.GeqH}), one assumes that $(\hhat,H)$ and $(\hat{g},H)$ are
  pseudo-inverses of $h$. Then it follows that
  $\paraa{\hat{g}^{ab}-\hhat^{ab}}h_{bc} = 0$
  which, when multiplying from the right by $\hhat^{cp}$, yields
  \begin{align*}
    \paraa{\hat{g}^{ap}-\hhat^{ap}}H = 0.
  \end{align*}
  Since $H$ is a regular element it follows that
  $\hat{g}^{ap}=\hhat^{ap}$.

  By using the definition of the two pseudo-inverses, one may rewrite
  the expression $\hat{g}^{ab}h_{bc}\hhat^{cp}$ in two ways,
  \begin{align*}
    &\paraa{\hat{g}^{ab}h_{bc}}\hhat^{cp} = G\delta^a_c\hhat^{cp} =
    G\hhat^{ap}\\
    &\hat{g}^{ab}\paraa{h_{bc}\hhat^{cp}} = \hat{g}^{ab}H\delta^p_b
    =\hat{g}^{ap}H
  \end{align*}
  proving (\ref{pseudo.inv.Hg.com}) (consider
  $\hhat^{ab}h_{bc}\hat{g}^{cp}$ for the second part of the
  statement). Setting $(\hhat,H)=(\hat{g},G)$ in the above result
  immediately gives $[H,\hhat^{ab}]=0$. Together with
  \begin{align*}
    h_{ab}H = h_{ap}\delta^{p}_bH = h_{ap}\hhat^{pc}h_{cb}
    =H\delta_a^ch_{cb}=Hh_{ab}
  \end{align*}
  this proves (\ref{pseudo.inv.h.com.H}). 

  Let us consider property (\ref{pseudo.inv.HG.com}). If
  $[H,\hat{g}^{ab}]=0$ then (\ref{pseudo.inv.Hg.com}) implies that
  \begin{align*}
    G\hhat^{ab} = \hat{g}^{ab}H = H\hat{g}^{ab} = \hhat^{ab}G.
  \end{align*}
  Moreover, 
  \begin{align*}
    &\hat{g}^{ab}H-H\hat{g}^{ab}=0\implies
    h_{ca}\hat{g}^{ab}H-h_{ca}H\hat{g}^{ab}=0\implies
    \text{(using (\ref{pseudo.inv.h.com.H}))}\\
    &h_{ca}\hat{g}^{ab}H-Hh_{ca}\hat{g}^{ab}=0\implies
    \paraa{GH-HG}\delta^{b}_c = 0\implies [G,H]=0,
  \end{align*}
  which concludes the proof of (\ref{pseudo.inv.HG.com}).

  Finally, to prove (\ref{pseudo.inv.hhat.sym}) one considers the
  hermitian conjugates of $h_{ab}\hhat^{bc}=H\delta_a^c$ and
  $\hhat^{ab}h_{bc}=H\delta^a_c$, which gives
  \begin{align*}
    &\paraa{\hhat^{bc}}^\ast h_{ba}=H\delta^{c}_a\\
    &h_{cb}\paraa{\hhat^{ab}}^\ast = H\delta^a_c,
  \end{align*}
  by using that $h_{ab}^\ast=h_{ba}$. The above equations show that
  if $k^{ab}=(\hhat^{ba})^\ast$ then $(k^{ab},H)$ is a pseudo-inverse
  for $h_{ab}$. Since $(\hhat^{ab},H)$ and $(k^{ab},H)$ are
  pseudo-inverses for $h_{ab}$, it follows from
  (\ref{pseudo.inv.GeqH}) that $\hhat^{ab}=k^{ab}=(\hhat^{ba})^\ast$. 
\end{proof}

\begin{definition}
  Let $(M,h,\gphi,\nabla)$ be a real pseudo-Riemannian calculus such that
  $h_{ab}$ has a pseudo-inverse $(\hhat^{ab},H)$ with respect to a
  basis of $\g$. A \emph{scalar curvature of
    $(M,h,\gphi,\nabla)$ with respect to $(\hhat^{ab},H)$} is an element $S\in\A$ such that
  \begin{align*}
    \hhat^{ab}R_{apbq}\hhat^{pq} = HSH.
  \end{align*}
\end{definition}

\begin{remark}
  Note that it is easy to show that $\hhat^{ab}R_{apbq}\hhat^{pq}$
  and, hence, the scalar curvature with respect to $(\hhat^{ab},H)$,
  is independent of the choice of basis in $\g$.
\end{remark}

\begin{proposition}
  Let $(M,h,\gphi,\nabla)$ be a real pseudo-Riemannian calculus, and
  let $(\hhat^{ab},H)$ be a pseudo-inverse of $h_{ab}$ with respect to
  a basis of $\g$. Then there exists at most
  one scalar curvature of $(M,h,\gphi,\nabla)$ with respect to
  $(\hhat^{ab},H)$ and, furthermore, the scalar curvature is
  hermitian.
\end{proposition}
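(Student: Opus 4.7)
The plan is to handle uniqueness and hermiticity in parallel, using the regularity of $H$ as the key cancellation tool in both. For uniqueness, if $S_1$ and $S_2$ both satisfy the defining equation, then $HS_1H = HS_2H$, and hence $H(S_1-S_2)H = 0$. Since $H$ is hermitian and regular (not a zero divisor), one may cancel $H$ from the right and then from the left to conclude that $S_1 = S_2$.

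For hermiticity, the strategy is to show that $HSH = \hhat^{ab}R_{apbq}\hhat^{pq}$ is itself hermitian; combined with the identity $(HSH)^\ast = HS^\ast H$ (which uses that $H$ is hermitian), this gives $H(S - S^\ast)H = 0$ and hence $S^\ast = S$ by the same cancellation argument. To see that $\hhat^{ab}R_{apbq}\hhat^{pq}$ is hermitian, I would compute its adjoint directly. Using Lemma~\ref{lemma:hhat.properties}(\ref{pseudo.inv.hhat.sym}), which gives $(\hhat^{ab})^\ast = \hhat^{ba}$, together with the fact that $R_{apbq}^\ast = R_{apbq}$ (since the pseudo-Riemannian calculus is real), one obtains
\begin{align*}
  \paraa{\hhat^{ab}R_{apbq}\hhat^{pq}}^\ast = \hhat^{qp}R_{apbq}\hhat^{ba},
\end{align*}
and after relabeling dummy indices this becomes $\hhat^{ab}R_{qbpa}\hhat^{pq}$. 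The task therefore reduces to checking the pointwise symmetry $R_{qbpa} = R_{apbq}$, which follows by applying the pair-exchange relation (\ref{eq:R.sym.comp.interchange.pairs}) once and then the two antisymmetries (\ref{eq:R.sym.comp.trivial}) and (\ref{eq:R.sym.comp.interchange}) from Proposition~\ref{prop:curvature.symmetries}.

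The main obstacle is the careful index bookkeeping in the hermiticity step, namely verifying that after relabeling dummy indices the symmetries of $R_{abpq}$ compose in just the right way to restore the original expression. Apart from that, both claims reduce to the elementary observation that a regular hermitian element $H$ can be cancelled from both sides of any equation of the form $HxH = 0$.
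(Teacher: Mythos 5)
Your proposal is correct and follows essentially the same route as the paper: uniqueness by cancelling the regular hermitian element $H$ in $H(S-S')H=0$, and hermiticity by combining $(\hhat^{ab})^\ast=\hhat^{ba}$ from Lemma~\ref{lemma:hhat.properties}, the hermiticity of $R_{apbq}$ (from the calculus being real), and the curvature symmetries \eqref{eq:R.sym.comp.trivial}--\eqref{eq:R.sym.comp.interchange.pairs} to show $\hhat^{ab}R_{apbq}\hhat^{pq}$ is self-adjoint. The index relabeling and the use of the pair-exchange plus the two antisymmetries to get $R_{qbpa}=R_{apbq}$ match the paper's computation.
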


\begin{proof}
  Uniqueness of the scalar curvature follows immediately from the fact
  that $H$ is regular; namely,
  \begin{align*}
    HSH = HS'H\equivalent H(S-S')H=0,
  \end{align*}
  which then implies that $S=S'$ by the regularity of $H$. 

  As noted in the beginning of this section, $R_{abcd}$
  is hermitian. Furthermore, Lemma~\ref{lemma:hhat.properties} states
  that $\paraa{\hhat^{ab}}^\ast=\hhat^{ba}$ which implies that
  \begin{align*}
    \paraa{\hhat^{ab}R_{apbq}\hhat^{pq}}^\ast = 
    \hhat^{qp}R_{apbq}\hhat^{ba} = 
    \hhat^{qp}R_{qbpa}\hhat^{ba}=\hhat^{ab}R_{apbq}\hhat^{pq},
  \end{align*}
  by using
  \eqref{eq:R.sym.comp.trivial}--\eqref{eq:R.sym.comp.interchange.pairs}. From
  the definition of scalar curvature, this implies that
  \begin{align*}
    HSH = (HSH)^\ast = HS^\ast H\equivalent H(S-S^\ast)H = 0.
  \end{align*}
  Since $H$ is assumed to be regular, it follows that $S=S^\ast$.
\end{proof}

\noindent If $S$ is the scalar curvature with respect to a
pseudo-inverse $(\hhat^{ab},H)$, in which $H$ is central, then any
scalar curvature (with respect to an arbitrary pseudo-inverse)
coincides with $S$, giving a unique hermitian scalar curvature of a real
pseudo-Riemannian calculus.

\begin{proposition}\label{prop:H.central.unique.scalar.curvature}
  Let $(M,h,\gphi,\nabla)$ be a real pseudo-Riemannian calculus with
  scalar curvature $S$ with respect to $(\hhat^{ab},H)$. If $H\in\ZA$
  then the scalar curvature is unique; i.e. if $S'$ is the scalar
  curvature with respect to $(\hat{g}^{ab},G)$, then $S'=S$.
\end{proposition}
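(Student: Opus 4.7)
The plan is to sandwich the scalar-curvature contraction $\hhat^{ab}R_{apbq}\hhat^{pq}$ between two copies of $G$, evaluate the result in two different ways, and cancel. All the ingredients are already packaged in Lemma~\ref{lemma:hhat.properties}; centrality of $H$ is the extra input that lets the two evaluations line up on the nose.

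First I would record the consequences of $H \in \ZA$. Since $H$ is central, $[H, \hat{g}^{ab}] = 0$ is automatic, so Lemma~\ref{lemma:hhat.properties}(\ref{pseudo.inv.HG.com}) yields $[G, \hhat^{ab}] = [H,G] = 0$. From part~(\ref{pseudo.inv.Hg.com}) of the same lemma, I then have the swapping identities
\begin{align*}
G\hhat^{ab} = \hat{g}^{ab}H \qand \hhat^{pq}G = H\hat{g}^{pq}.
\end{align*}

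Next, I would compute $G\hhat^{ab}R_{apbq}\hhat^{pq}G$ in two ways. Substituting the defining identity $HSH = \hhat^{ab}R_{apbq}\hhat^{pq}$ and using that $H$ is central (so it commutes with $G$ and with $S$), this equals $GHSHG = H^2 GSG$. On the other hand, applying the two swapping identities displayed above gives $\hat{g}^{ab}HR_{apbq}H\hat{g}^{pq}$, which by centrality of $H$ collapses to $H^2\hat{g}^{ab}R_{apbq}\hat{g}^{pq} = H^2\, GS'G$.

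To finish, I would equate the two expressions to obtain $H^2 GSG = H^2 GS'G$ and then invoke regularity twice: $H$ is regular by assumption, so $H^2$ is regular and can be cancelled, giving $GSG = GS'G$; and $G$ is regular as part of its being a pseudo-inverse, so cancelling $G$ on the left and then on the right yields $S = S'$. There is no real obstacle beyond bookkeeping of the commutations --- the whole argument is driven by the observation that centrality of $H$ upgrades the asymmetric swapping identities of Lemma~\ref{lemma:hhat.properties}(\ref{pseudo.inv.Hg.com}) into commutation relations that leave the central scalar factor $H^2$ intact on both sides of the contraction.
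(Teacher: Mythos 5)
Your argument is correct and is essentially the paper's own proof: both sandwich the contraction $\hhat^{ab}R_{apbq}\hhat^{pq}$ between two copies of $G$, use part~(\ref{pseudo.inv.Hg.com}) of Lemma~\ref{lemma:hhat.properties} together with centrality of $H$ to identify the result with $H^2GS'G$ on one side and $H^2GSG$ on the other, and then cancel the regular elements $H$ and $G$. The only cosmetic difference is that you first extract $[G,\hhat^{ab}]=[H,G]=0$ via part~(\ref{pseudo.inv.HG.com}), which the paper gets directly from $H\in\ZA$.
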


\begin{proof}
  If $H\in\ZA$ then property (\ref{pseudo.inv.Hg.com}) of
  Lemma~\ref{lemma:hhat.properties} implies that
  \begin{align*}
    G(HSH)G = G\hhat^{ab}R_{apbq}\hhat^{pq}G
    =H\hat{g}^{ab}R_{apbq}\hat{g}^{pq}H=H(GS'G)H,
  \end{align*}
  and since $[H,G]=0$ one obtains
  \begin{align*}
    HG(S-S')GH = 0\implies S=S'
  \end{align*}
  since $G$ and $H$ are assumed to be regular.
\end{proof}

\begin{remark}
  In particular, if the metric $h_{ab}$ is invertible, i.e. it has a
  pseudo-inverse $(\hhat^{ab},\mid)$, then
  Proposition~\ref{prop:H.central.unique.scalar.curvature} implies
  that there exists a unique scalar curvature of the corresponding
  real pseudo-Riemannian calculus.
\end{remark}

\section{The noncommutative torus}
\label{sec:nc.torus}

\noindent After having developed a general framework for Riemannian
curvature of a real metric calculus, it is time to consider some
examples, in order to motivate our definitions. As a starter, let us
consider the noncommutative torus and construct a pseudo-Riemannian
calculus over it (cf. \cite{a:curvatureGeometric}
for a related approach that uses the concrete embedding of the torus
into $\reals^4$). For the noncommutative torus, our construction of a
Levi-Civita connection, and its corresponding curvature, is similar to
the approach taken in \cite{r:leviCivita}.

As we shall work in close analogy with differential geometry, let us
briefly review the geometry of the Clifford torus. We consider the
Clifford torus as embedded in $\reals^4$ with the flat induced
metric. Concretely, let us consider the following parametrization
\begin{align*}
  \xv = (x^1,x^2,x^3,x^4) = (\cos u,\sin u,\cos v,\sin v)
\end{align*}
which implies that the tangent space at a point is spanned by
\begin{align*}
  &\d_u\xv = (-\sin u,\cos u,0,0)=(-x^2,x^1,0,0)\\
  &\d_v\xv = (0,0,-\sin v,\cos v)=(0,0,-x^4,x^3),
\end{align*}
from which the induced metric is obtained as
\begin{align*}
  (h_{ab}) = 
  \begin{pmatrix}
    1 & 0 \\ 0 & 1
  \end{pmatrix}.
\end{align*}
Setting $z=x^1+ix^2$, $w=x^3+ix^4$ and
$\d_1=\d_u$, $\d_2=\d_v$ yields
\begin{alignat*}{2}
  &\d_1z = iz &\qquad &\d_1w = 0\\
  &\d_2z = 0  &       &\d_2w = iw.
\end{alignat*}
As the noncommutative torus $\Ttheta$, we consider the unital
$\ast$-algebra generated by two unitary operators $Z,W$ satisfying $WZ
= qZW$ with $q = e^{2\pi i\theta}$, and we introduce
\begin{alignat*}{2}
  X^1 &= \frac{1}{2}\paraa{Z+\Zs}    &\qquad X^2
  &=\frac{1}{2i}\paraa{Z-\Zs}\\
  X^3 &= \frac{1}{2}\paraa{W+\Ws}    &\qquad X^4 
  &=\frac{1}{2i}\paraa{W-\Ws}.
\end{alignat*}
In analogy with the geometrical setting, let $M$ be the (right)
submodule of $(\Ttheta)^4$ generated by
\begin{align*}
  &E_1 = \paraa{-X^2,X^1,0,0}\\
  &E_2 = \paraa{0,0,-X^4,X^3},
\end{align*}
and for $U,V\in M$, with $U=E_aU^a$ and $V=E_aV^a$ we set
\begin{align*}
  h(U,V) = \sum_{a=1}^2\paraa{U^a}^\ast V^a.
\end{align*}

\begin{proposition}
  The elements $E_1,E_2\in M$ gives a basis for $M$ and $h$ is a
  nondegenerate hermitian form on $M$. Thus, $(M,h)$ is a free metric $\Ttheta$-module.
\end{proposition}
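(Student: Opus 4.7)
The plan is to reduce everything to the observation that $E_1$ and $E_2$ are $\Ttheta$-linearly independent inside the free module $(\Ttheta)^4$, at which point the module structure and the inner product become essentially coordinate-wise, and both the hermitian form axioms and nondegeneracy drop out routinely.

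First I would verify linear independence. Suppose $E_1 a^1 + E_2 a^2 = 0$ in $(\Ttheta)^4$. Reading off the four components, this is equivalent to
\begin{align*}
X^1 a^1 = X^2 a^1 = 0 \qtext{and} X^3 a^2 = X^4 a^2 = 0.
\end{align*}
Combining the first pair gives $(X^1 + iX^2)a^1 = Z a^1 = 0$, and since $Z$ is unitary (hence invertible in $\Ttheta$) it follows that $a^1 = 0$. The analogous argument with $W = X^3 + iX^4$ shows $a^2=0$. Since by definition $E_1,E_2$ generate $M$, this makes $\{E_1,E_2\}$ a basis, so every $U\in M$ has a \emph{unique} expansion $U = E_a U^a$; in particular the formula for $h$ is well-defined.

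Next I would check the three hermitian form axioms. Additivity in the second argument and right $\A$-linearity follow immediately from the fact that if $V = E_a V^a$ and $W = E_a W^a$, then $V + W = E_a(V^a + W^a)$ and $Va = E_a(V^a a)$, combined with the definition $h(U,V) = \sum_a (U^a)^\ast V^a$. Conjugate symmetry is just
\begin{align*}
h(U,V)^\ast = \parab{\textstyle\sum_a (U^a)^\ast V^a}^{\!\ast} = \sum_a (V^a)^\ast U^a = h(V,U).
\end{align*}

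Finally, for nondegeneracy, suppose $h(U,V) = 0$ for every $V \in M$. Taking $V = E_b$ (so $V^a = \delta^a_b \mid$) gives $h(U,E_b) = (U^b)^\ast = 0$, hence $U^b = 0$ for each $b$, i.e. $U = 0$. The only nontrivial step is the linear independence of $E_1, E_2$, and even that is essentially immediate once one exploits the unitarity of the generators $Z$ and $W$; no real obstacle is expected.
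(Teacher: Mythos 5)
Your proof is correct and follows essentially the same route as the paper: reduce to componentwise equations in $(\Ttheta)^4$ for linear independence, then test nondegeneracy against $V=E_b$. The only (cosmetic) difference is that you conclude independence from $Za^1=0$ via $Z=X^1+iX^2$ and unitarity, whereas the paper forms $ZZ^\ast a=(\,(X^1)^2+(X^2)^2)a=0$; both are immediate, and your explicit check of the hermitian-form axioms and of well-definedness of $h$ via the unique expansion is a harmless addition the paper leaves implicit.
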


\begin{proof}
  First, let us show that $E_1,E_2$ are free generators
  \begin{align*}
    &E_1a + E_2b = 0\implies
    (-X^2a,X^1a,-X^4b,X^3b) = (0,0,0,0)\implies\\
    &\begin{cases}
      \paraa{(X^1)^2 + (X^2)^2}a = 0\\
      \paraa{(X^3)^2 + (X^4)^2}b = 0
    \end{cases}\equivalent
    \begin{cases}
      Z\Zs a = 0\\
      W\Ws b = 0
    \end{cases}\equivalent
    a=b=0.
  \end{align*}
  Next, we prove that $h$ is nondegenerate on $M$. Let $U,V\in M$ and
  write $U=E_aU^a$. Assuming that $h(U,V)=0$ for all $V\in M$ may be
  equivalently stated as $h(U,E_a)=0$ for $a=1,2$, which immediately
  gives $U^1=U^2 = 0$. 
\end{proof}

\noindent
Next, we let $\g$ be the (real) Lie algebra generated by the two
hermitian derivations $\d_1,\d_2$, given by
\begin{alignat*}{2}
  &\d_1Z = iZ  &\qquad &\d_1W = 0\\
  &\d_2 Z = 0 &        &\d_2W = iW,
\end{alignat*}
from which it follows that $[\d_1,\d_2] = 0$. Together with the map
$\varphi:\g\to M$, defined as $\varphi(\d_a) = E_a$ and extended by
linearity, it is easy to check that $(M,h,\gphi)$ is a real metric
calculus over $\Ttheta$. Furthermore, we note that, with respect to the basis
$\{\d_1,\d_2\}$ of $\g$, the metric
\begin{align*}
  (h_{ab}) = \paraa{h(E_a,E_b)} = 
  \begin{pmatrix}
    \mid & 0 \\
    0 & \mid
  \end{pmatrix}
\end{align*}
is invertible.

One is now in position to use Corollary
\ref{cor:connection.from.Kozul} to find a unique connection $\nabla$
on $M$ such that $(M,h,\gphi,\nabla)$ is a pseudo-Riemannian calculus.
However, since $h(E_a,E_b)=\delta_{ab}\mid$ and $[\d_a,\d_b]=0$, the
only solution to
\begin{equation*}
  \begin{split}
    2h&(U_{ab},E_c)=
    \d_ah(E_b,E_c)+\d_bh(E_a,E_c)-\d_ch(E_a,E_b)\\
    &\quad
    -h\paraa{E_a,\varphi([\d_b,\d_c])}
    +h\paraa{E_b,\varphi([\d_c,\d_a])}
    +h\paraa{E_c,\varphi([\d_a,\d_b])}
  \end{split}
\end{equation*}
is $U_{ab}=0$, which gives $\nabla_dU = 0$ for all $d\in\g$ and $U\in
M$. Hence, the curvature of the corresponding pseudo-Riemannian
calculus vanishes identically, and the (unique) scalar curvature is $0$.

As done in \cite{r:leviCivita} one can obtain more interesting results
by conformally perturbing the metric 
\begin{align*}
  h_\alpha(U,V) = \sum_{a=1}^2\paraa{U^a}^\ast e^{\alpha} V^a
\end{align*}
for some hermitian element $\alpha\in \Ttheta$ (here, of course, one
considers the smooth part of the $C^\ast$-algebra generated by $Z,W$).
One can easily check that $(M,h_\alpha,\gphi)$ is a real metric
calculus, and one may find a connection $\nabla$ (using
Corollary~\ref{cor:connection.from.Kozul}) such that
$(M,h_\alpha,\gphi,\nabla)$ is a pseudo-Riemannian calculus.  However,
unless $\alpha$ is central, it will in general not be a \emph{real}
pseudo-Riemannian calculus.

\section{The noncommutative 3-sphere}
\label{sec:nc.three.sphere}

\noindent As a main motivating example for this paper, we consider the
noncommutative 3-sphere. We shall explicitly construct a real
pseudo-Riemannian calculus together with a basis of $\g$ for which the
metric $h_{ab}$ has a pseudo-inverse $(\hhat^{ab},H)$ with $H\in\ZA$,
giving a unique scalar curvature (via
Proposition~\ref{prop:H.central.unique.scalar.curvature}). As for the
case of the torus, we shall use the analogy with differential geometry
to find an appropriate metric calculus. Therefore, let us start by
recalling the Hopf parametrization of $S^3$.

\subsection{Hopf coordinates for $S^3$}
\label{sec:hopf.coordinates}

The 3-sphere can be described as embedded in $\complex^2$ by two
complex coordinates $z=x^1+ix^2$ and $w=x^3+ix^4$, satisfying
$|z|^2+|w|^2=1$, which can be realized by
\begin{align*}
  &z = e^{i\xi_1}\sin\eta\\
  &w = e^{i\xi_2}\cos\eta,
\end{align*}
giving
\begin{alignat*}{2}
  &x^1=\cos\xi_1\sin\eta  & \qquad  &x^2=\sin\xi_1\sin\eta\\
  &x^3= \cos\xi_2\cos\eta &        &x^4=\sin\xi_2\cos\eta.
\end{alignat*}
At every point where $0<\xi_1,\xi_2<2\pi$ and $0<\eta<\pi/2$, the
tangent space is spanned by the three vectors
\begin{align*}
  &E_1 = \d_1(x^1,x^2,x^3,x^4) = (-x^2,x^1,0,0)\\
  &E_2 = \d_2(x^1,x^2,x^3,x^4) = (0,0,-x^4,x^3)\\
  &E_\eta = \d_\eta(x^1,x^2,x^3,x^4) =
    (\cos\xi_1\cos\eta,\sin\xi_1\cos\eta,
    -\cos\xi_2\sin\eta,-\sin\xi_2\sin\eta),
\end{align*}
where $\d_1=\d_{\xi_1}$ and $\d_2=\d_{\xi_2}$. Instead of $\d_\eta$,
one may introduce the derivation $\d_3 = |z||w|\d_\eta$, which gives
\begin{align*}
  E_3 = \d_3(x^1,x^2,x^3,x^4) = 
  (x^1|w|^2,x^2|w|^2,-x^3|z|^2,-x^4|z|^2),
\end{align*}
and one may equally well span the tangent space by $E_1,E_2,E_3$. The
action of $\d_1,\d_2,\d_3$ on $z$ and $w$ is given by
\begin{alignat}{2}
  &\d_1(z)=iz &\qquad &\d_1(w)=0\label{eq:Sthree.d.one}\\
  &\d_2(z)=0 &        &\d_2(w)=iw\label{eq:Sthree.d.two}\\
  &\d_3(z)=z|w|^2 &   &\d_3(w)=-w|z|^2.\label{eq:Sthree.d.three}
\end{alignat}

\noindent With respect to the basis $\{E_1,E_2,E_3\}$ of $T_pS^3$ the induced metric becomes
\begin{align}
  (h_{ab}) = 
  \begin{pmatrix}
    |z|^2 & 0 & 0\\
    0 & |w|^2 & 0\\
    0 & 0 & |z|^2|w|^2
  \end{pmatrix}.
\end{align}

\subsection{A pseudo-Riemannian calculus for $\Sthreet$}
\label{sec:pseudo.Riemannian.calculus.Sthreet}

\noindent
For our purposes, the noncommutative three sphere $\Sthreet$
\cite{m:ncspheres,m:ncspheresII} is a unital $\ast$-algebra generated
by $Z,\Zs,W,\Ws$ subject to the relations
\begin{alignat}{4}
  &WZ = qZW  &\quad  &\Ws Z=\qb Z\Ws &\quad &W\Zs=\qb\Zs W & &\Ws\Zs=q\Zs\Ws\label{eq:Sthree.def}\\
  &\Zs Z=Z\Zs & &\Ws W=W\Ws & &W\Ws = \mid- Z\Zs, \notag
\end{alignat}
where $q=e^{2\pi i\theta}$. It follows from the defining relations
that a basis for $\Sthreet$ is given by the monomials
\begin{align*}
  Z^i(Z^\ast)^jW^{(k)}
\end{align*}
for $i,j\geq 0$ and $k\in\integers$, where 
\begin{align*}
  W^{(k)} = 
  \begin{cases}
    W^k\text{ if }k\geq 0\\
    (W^\ast)^{-k}\text{ if }k<0
  \end{cases}.
\end{align*}

\noindent
Let us collect a few properties of $\Sthreet$ that will be useful to
us.

\begin{proposition}\label{prop:Sthree.zero.div}
  If $a\in\Sthreet$ then
  \begin{enumerate}
  \item $ZZ^\ast a = 0\implies a=0$,
  \item $WW^\ast a = 0\implies a=0$.
  \end{enumerate}
  Moreover, $ZZ^\ast$ and $WW^\ast$ are central elements of $\Sthreet$.
\end{proposition}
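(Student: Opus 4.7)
The plan is to reduce both statements to the explicit basis description of $\Sthreet$. First I would check that $ZZ^\ast$ and $WW^\ast$ are central. It suffices to verify that $ZZ^\ast$ commutes with each of the four generators: the relation $Z^\ast Z = ZZ^\ast$ handles $Z$ and $Z^\ast$, while $W\cdot ZZ^\ast = qZW\cdot Z^\ast = q\qb\, ZZ^\ast W = ZZ^\ast W$ (using $WZ = qZW$ and $WZ^\ast = \qb Z^\ast W$ from \eqref{eq:Sthree.def}) handles $W$, and the analogous computation handles $W^\ast$. Since $WW^\ast = \mid - ZZ^\ast$ by the defining relations, centrality of $WW^\ast$ follows immediately.

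For the first zero-divisor statement, I would expand $a = \sum c_{ijk}\, Z^i(Z^\ast)^j W^{(k)}$ in the given basis (a finite sum with $i,j\geq 0$ and $k\in\integers$). Using centrality of $ZZ^\ast$,
\begin{align*}
  ZZ^\ast a = \sum c_{ijk}\, Z^{i+1}(Z^\ast)^{j+1}W^{(k)},
\end{align*}
and the monomials on the right are distinct basis elements, namely precisely those whose $Z$- and $Z^\ast$-exponents are both at least one. Linear independence of the basis then forces every $c_{ijk}=0$, so $a=0$.

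For the second statement, I would exploit the identity $WW^\ast = \mid - ZZ^\ast$ to rewrite $WW^\ast a = 0$ as $a = ZZ^\ast a$. Comparing the coefficient of each basis element $Z^i(Z^\ast)^j W^{(k)}$ on the two sides, with $ZZ^\ast a$ expanded as above, yields $c_{ijk}=0$ whenever $i=0$ or $j=0$, together with the recursion $c_{ijk}=c_{i-1,j-1,k}$ for $i,j\geq 1$. A descent on $\min(i,j)$ then propagates the boundary vanishing to every triple $(i,j,k)$, giving $a=0$.

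The centrality check and the first zero-divisor statement are essentially immediate from the defining relations and the basis. The only mildly delicate point is the $WW^\ast$ case: multiplication by $WW^\ast$ does not simply shift exponents in the chosen basis, so one must pass through $WW^\ast = \mid - ZZ^\ast$ and carry out the descent on the $(i,j)$ indices to conclude.
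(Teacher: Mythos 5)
Your proposal is correct and follows essentially the same route as the paper: centrality is checked on the generators via the relations \eqref{eq:Sthree.def}, the $ZZ^\ast$ case is settled by shifting exponents in the basis $Z^i(Z^\ast)^jW^{(k)}$, and the $WW^\ast$ case uses $WW^\ast=\mid-ZZ^\ast$ together with the coefficient recursion $c_{ijk}=c_{i-1,j-1,k}$ and the vanishing at $i=0$ or $j=0$, which is exactly the paper's argument in slightly different packaging.
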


\begin{proof}
  Let us prove that $Z\Zs$ commutes with every element of $\Sthreet$ (the
  proof for $W\Ws$ is analogous). From the defining relations of the
  algebra, it is clear that $Z\Zs$ commutes with $Z$ and $\Zs$. Let us
  check that $Z\Zs$ commutes with $W$ and $\Ws$:
  \begin{align*}
    &WZ\Zs = qZW\Zs = q\qb Z\Zs W = Z\Zs W\\
    &\Ws Z\Zs = \qb Z\Ws\Zs =\qb q Z\Zs\Ws = Z\Zs\Ws.
  \end{align*}
  Next, let us show that neither $ZZ^\ast$ nor $WW^\ast$ is a zero
  divisor.  An arbitrary element $a\in\Sthreet$ may be written as
  \begin{align*}
    a = \sum_{i,j\geq 0,k\in\integers}a_{ijk}Z^i(Z^\ast)^jW^{(k)}
  \end{align*}
  for $a_{ijk}\in\complex$, and it follows that
  \begin{align*}
    ZZ^\ast a = \sum_{i,j\geq 0,k\in\integers}a_{ijk}Z^{i+1}(Z^\ast)^{j+1}W^{(k)}
  \end{align*}
  since $[Z,Z^\ast] = 0$. As $Z^i(Z^\ast)^jW^{(k)}$ is a basis for
  $\Sthreet$, setting $ZZ^\ast a=0$ demands that $a_{ijk}=0$ for all
  $i,j\geq 0$ and $k\in\integers$, which implies that $a=0$. Analogously, 
  \begin{align*}
    WW^\ast a &= (\mid-ZZ^\ast)a 
    =\sum_{i,j\geq 0,k\in\integers}
     \parab{a_{ijk}Z^{i}(Z^\ast)^{j}W^{(k)}-a_{ijk}Z^{i+1}(Z^\ast)^{j+1}W^{(k)}}\\
    &=\sum_{j\geq 0,k\in\integers}a_{0jk}(Z^\ast)^jW^{(k)}
      +\sum_{i\geq 1,k\in\integers}a_{i0k}Z^iW^{(k)}\\
    &\quad+\sum_{i,j\geq 1,k\in\integers}
    \paraa{a_{ijk}-a_{i-1,j-1,k}}Z^{i}(Z^\ast)^{j}W^{(k)},
  \end{align*}
  which can easily be seen to give $a_{ijk}=0$ upon setting $WW^\ast a=0$.
\end{proof}

\noindent
Let us introduce the notation 
\begin{alignat*}{2}
  &X^1 = \half\paraa{Z + Z^\ast} &\qquad
  &X^2 = \frac{1}{2i}\paraa{Z-Z^\ast}\\
  &X^3 = \half\paraa{W + W^\ast} &\qquad
  &X^4 = \frac{1}{2i}\paraa{W-W^\ast}\\
  &\Zsq = ZZ^\ast & &\Wsq = WW^\ast,
\end{alignat*}
and note that $\Zsq = (X^1)^2 + (X^2)^2$ and 
$\Wsq=(X^3)^2 + (X^4)^2$, as well as
\begin{align*}
  (X^1)^2 + (X^2)^2 + (X^3)^2 + (X^4)^2 = \Zsq+\Wsq = \mid.
\end{align*}
In the following, we shall construct a real pseudo-Riemannian calculus
for $\Sthreet$. Let us start by introducing a metric module $(M,h)$
in close analogy with the Hopf parametrization in Section
\ref{sec:hopf.coordinates}.  Therefore, we let $E_1,E_2,E_3$ be the
following elements of the free (right) module $(\Sthreet)^4$:
\begin{equation}\label{Sthree.Evec.def}
  \begin{split}
    &E_1 = (-X^2,X^1,0,0)\\
    &E_2 = (0,0,-X^4,X^3)\\
    &E_3 = (X^1\Wsq, X^2\Wsq, -X^3\Zsq, -X^4\Zsq),
  \end{split}
\end{equation}
and let $M$ be the module generated by $\{E_1,E_2,E_3\}$.

\begin{proposition}\label{prop:M.free.module}
  The module $M=\{E_1a+E_2b+E_3c:\,a,b,c\in\Sthreet\}$ is a free
  right $\Sthreet$-module with a basis given by the elements
  $\{E_1,E_2,E_3\}$.
\end{proposition}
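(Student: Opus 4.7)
The plan is to verify freeness directly: suppose $E_1 a + E_2 b + E_3 c = 0$ for some $a,b,c \in \Sthreet$ and deduce $a=b=c=0$ by exploiting the componentwise equations together with Proposition~\ref{prop:Sthree.zero.div}.

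Writing out the vanishing in the free module $(\Sthreet)^4$ componentwise gives the four scalar equations
\begin{align*}
  &-X^2 a + X^1 \Wsq c = 0,\\
  &\phantom{-}X^1 a + X^2 \Wsq c = 0,\\
  &-X^4 b - X^3 \Zsq c = 0,\\
  &\phantom{-}X^3 b - X^4 \Zsq c = 0.
\end{align*}
The crucial observation is that $X^1,X^2$ lie in the commutative subalgebra generated by $Z,\Zs$, and $X^3,X^4$ lie in the commutative subalgebra generated by $W,\Ws$; moreover $\Zsq$ and $\Wsq$ are central by Proposition~\ref{prop:Sthree.zero.div}. So the usual $2\times 2$ determinant trick can be applied inside each commutative block.

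Multiplying the first equation by $X^2$ on the left and the second by $X^1$ on the left and adding, the $\Wsq c$ contributions cancel (because $X^1X^2=X^2X^1$), and what remains is $\paraa{(X^1)^2+(X^2)^2}a = \Zsq a = 0$; Proposition~\ref{prop:Sthree.zero.div} then yields $a=0$. Feeding this back into the first two equations gives $X^1\Wsq c = X^2\Wsq c = 0$, and multiplying by $X^1$ and $X^2$ respectively and adding produces $\Zsq\Wsq c = 0$; two applications of Proposition~\ref{prop:Sthree.zero.div} force $c=0$. The last two equations then reduce to $X^3 b = X^4 b = 0$, from which the same argument gives $\Wsq b=0$ and hence $b=0$.

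The only potential obstacle is the noncommutativity between the $Z$-block and the $W$-block, but this never intervenes in the argument above: the cancellations happen inside a single commutative subalgebra, and the passage between blocks is mediated only by the central elements $\Zsq$ and $\Wsq$, for which Proposition~\ref{prop:Sthree.zero.div} provides precisely the needed no-zero-divisor property. Thus $E_1,E_2,E_3$ are free generators, so $M$ is a free right $\Sthreet$-module with basis $\{E_1,E_2,E_3\}$.
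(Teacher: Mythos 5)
Your overall strategy is the same as the paper's: reduce $E_1a+E_2b+E_3c=0$ to the four componentwise equations and use the centrality and regularity of $\Zsq$ and $\Wsq$ from Proposition~\ref{prop:Sthree.zero.div} to conclude $a=b=c=0$; the only structural difference is the order of elimination (the paper eliminates $a$ first, obtaining $\Zsq\Wsq c=0$ and hence $c=0$, whereas you aim to get $a=0$ first). However, your first elimination step does not work as written: left-multiplying the first equation by $X^2$ and the second by $X^1$ and \emph{adding} gives $\paraa{(X^1)^2-(X^2)^2}a+\paraa{X^2X^1+X^1X^2}\Wsq c=0$, so the $\Wsq c$-terms do not cancel (they double), and the coefficient of $a$ is not $\Zsq$. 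The intended cancellation needs a sign: subtract $X^2$ times the first equation from $X^1$ times the second (equivalently, multiply the first by $-X^2$ and the second by $X^1$ and add), which, using $[X^1,X^2]=0$, yields $\Zsq a=0$ and hence $a=0$. With this correction the rest of your argument goes through exactly as you describe ($X^1\Wsq c=X^2\Wsq c=0$ gives $\Zsq\Wsq c=0$, so $c=0$; then $X^3b=X^4b=0$ gives $\Wsq b=0$, so $b=0$), and the proof becomes essentially the paper's with the roles of $a$ and $c$ interchanged; note that the paper's choice of multipliers ($X^1$ on the first equation, $X^2$ on the second, then adding) is precisely the one for which addition does produce the cancellation, giving $\Zsq\Wsq c=0$ directly.
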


\begin{proof}
  By construction $\{E_1,E_2,E_3\}$ are generators of $M$. To prove
  that $M$ is a free module, we assume that $a,b,c\in\Sthreet$ is such
  that
  \begin{align*}
    E = E_1a + E_2b + E_3c = 0,
  \end{align*}
  and show that $a=b=c=0$. The requirement that $E=0$ is equivalent to
  \begin{alignat*}{2}
    -&X^2a + X^1\Wsq c = 0 &\qquad 
    &X^1a + X^2\Wsq c = 0 \\
    -&X^4b - X^3\Zsq c = 0 &
    &X^3b - X^4\Zsq c  = 0,
  \end{alignat*}
  and multiplying the first to equations by $X^1$ and $X^2$,
  respectively, and summing them yields (using that $[X^1,X^2]=0$)
  \begin{align*}
    \paraa{(X^1)^2 + (X^2)^2}\Wsq c = 0\equivalent
    \Zsq\Wsq c = 0.
  \end{align*}
  It follows from Proposition \ref{prop:Sthree.zero.div} that $c=0$,
  and the system of equations becomes
  \begin{alignat*}{2}
    &X^2a = 0 & \qquad
    &X^1a = 0\\
    &X^4b = 0 & 
    &X^3b = 0,
  \end{alignat*}
  from which it follows that $\paraa{(X^1)^2+(X^2)^2}a = 0$ and
  $\paraa{(X^3)^2+(X^4)^2}a = 0$, which is equivalent to
  \begin{align*}
    \Zsq a = 0\qquad \Wsq b = 0.
  \end{align*}
  Again, it follows from Proposition \ref{prop:Sthree.zero.div} that
  $a=b=0$. This shows that $\{E_1,E_2,E_3\}$ is basis for $M$.
\end{proof}

\noindent 
In the differential geometric setting, the three tangent vectors
$E_1,E_2,E_3$ are associated to the three derivations $\d_1,\d_2,\d_3$,
as given in \eqref{eq:Sthree.d.one}--\eqref{eq:Sthree.d.three}. It
turns out that these derivations have noncommutative analogues.

\begin{proposition}\label{prop:Sthree.derivations}
  There exist hermitian derivations $\d_1,\d_2,\d_3\in\Der(\Sthreet)$
  such that
  \begin{alignat*}{2}
    &\d_1(Z) = iZ & \qquad &\d_1(W) = 0\\
    &\d_2(Z) = 0  & &\d_2(W) = iW\\
    &\d_3(Z) = Z\Wsq & &\d_3(W) = -W\Zsq,
  \end{alignat*}
  and $[\d_a,\d_b]=0$ for $a,b=1,2,3$.
\end{proposition}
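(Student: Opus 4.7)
The strategy is to realize each $\d_a$ via the universal property of a free algebra. Write $\Sthreet = F/I$, where $F$ is the free unital $\ast$-algebra on four symbols $Z,\Zs,W,\Ws$ and $I$ is the two-sided $\ast$-ideal generated by the six relations in \eqref{eq:Sthree.def}. On $F$, define each $\d_a$ on $Z,W$ by the stated formulas and on $\Zs,\Ws$ by the hermiticity convention $\d_a(x^\ast)=\paraa{\d_a(x)}^\ast$, extending to all of $F$ by $\complex$-linearity and the Leibniz rule. To descend $\d_a$ to $\Sthreet$ one must check that $\d_a(I)\subseteq I$, and since $I$ is a two-sided ideal and $\d_a$ is a derivation, it suffices to verify that $\d_a(r)\in I$ for each of the six generators $r$ of $I$.

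For $\d_1$ and $\d_2$ this is immediate, since they are the infinitesimal generators of the $U(1)\times U(1)$ action $(Z,W)\mapsto(e^{is}Z,e^{it}W)$, under which every defining relation is bihomogeneous of bidegree $(0,0)$ and hence invariant. For $\d_3$ the verification is more delicate, but the crucial simplification is that $\Zsq$ and $\Wsq$ are central in $\Sthreet$ by Proposition~\ref{prop:Sthree.zero.div}, so once we reduce modulo $I$ these elements may be freely commuted past any generator. A representative computation, carried out in $\Sthreet$, is
\begin{align*}
    \d_3(WZ-qZW) = -WZ\Zsq + WZ\Wsq - qZW\Wsq + qZW\Zsq = (WZ-qZW)(\Wsq-\Zsq) = 0,
\end{align*}
and the three remaining $q$-commutation relations are handled identically. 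The sphere relation reduces to
\begin{align*}
    \d_3(W\Ws + Z\Zs - \mid) = -2\Wsq\Zsq + 2\Zsq\Wsq = 0,
\end{align*}
while the trivial commutators $[Z,\Zs]$ and $[W,\Ws]$ are dispatched in the same manner. Hermiticity of each $\d_a$ is automatic from the construction.

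For pairwise commutativity $[\d_a,\d_b]=0$, observe that the commutator of two derivations is again a derivation, so equality on the generators $Z,W$ suffices (hermiticity extends the identity to $\Zs,\Ws$, and Leibniz then to all of $\Sthreet$). The key point is that $\d_1\Zsq=\d_1\Wsq=\d_2\Zsq=\d_2\Wsq=0$; for instance,
\begin{align*}
    \d_1\d_3(Z) = \d_1(Z\Wsq) = iZ\Wsq = \d_3(iZ) = \d_3\d_1(Z),
\end{align*}
and the remaining pairs are verified analogously. The main obstacle is therefore pure bookkeeping of the $\d_3$ verification; the entire argument ultimately rests on the centrality statement in Proposition~\ref{prop:Sthree.zero.div}, which collapses every potential obstruction term into a central scalar.
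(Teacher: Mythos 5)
Your proposal is correct and follows essentially the same route as the paper: define each $\d_a$ on the generators and extend by Leibniz' rule, verify compatibility with the defining relations \eqref{eq:Sthree.def} using the centrality of $\Zsq$ and $\Wsq$ (your computation for $WZ-qZW$ is exactly the paper's), and check $[\d_a,\d_b]=0$ on the generators; your free-algebra/ideal formulation just makes the paper's ``check that it respects all the relations'' step explicit. The only blemish is the claim that every defining relation is bihomogeneous of bidegree $(0,0)$ --- the $q$-commutation relators have bidegree $(\pm 1,\pm 1)$ --- but since each relator is homogeneous, $\d_1$ and $\d_2$ send it to a scalar multiple of itself, so the ideal is still preserved and your argument stands.
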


\begin{proof}
  Let us show that $\d_3$ exists; the proof that $\d_1,\d_2$ exist is
  analogous. If $\d_3$ exists, the fact that it is hermitian, together
  with $\d_3(Z)=Z\Wsq$ and $\d_3(W)=-W\Zsq$ completely determines
  $\d_3$ via 
  \begin{align*}
    &\d_3(Z) = Z\Wsq\qquad \d_3(W)=-W\Zsq\\
    &\d_3(\Zs) = \Zs\Wsq\qquad \d_3(\Ws) = -\Ws\Zsq,
  \end{align*}
  since the action on an arbitrary element of $\Sthreet$ is given by
  applying Leibniz' rule repeatedly. Conversely, one may try to define
  $\d_3$ via the above relations and extend it to $\Sthreet$ through
  Leibniz' rule. However, to show that $\d_3$ is a derivation on
  $\Sthreet$, one one needs to check that it respects all the
  relations between $Z,W$ (given in \eqref{eq:Sthree.def}). For
  instance, applying Leibniz' rule to $\d_3(WZ-qZW)$ gives
  \begin{align*}
    \d_3(WZ-qZW) &= (\d_3W)Z + W(\d_3Z) - q(\d_3Z)W -qZ(\d_3W)\\
    &= -W\Zsq Z + WZ\Wsq-qZ\Wsq W+qZW\Zsq\\
    &=-(WZ-qZW)\Zsq+(WZ-qZW)\Wsq = 0,
  \end{align*}
  as required (using that $\Zsq$ and $\Wsq$ are central). In the same
  way, one may check that $\d_3$ is compatible with all the relations
  in $\Sthreet$ (given in \eqref{eq:Sthree.def}), which shows that $\d_3$ is indeed a derivation on
  $\Sthreet$.  To prove that $[\d_a,\d_b]=0$ one simply shows that
  \begin{align*}
    [\d_a,\d_b](Z)=[\d_a,\d_b](\Zs)=
    [\d_a,\d_b](W)=[\d_a,\d_b](\Ws)=0,
  \end{align*}
  which, by Leibniz' rule, implies that $[\d_a,\d_b](a)=0$ for all
  $a\in\Sthreet$. For instance
  \begin{align*}
    [\d_1,\d_3](Z) &= \d_1\paraa{\d_3(Z)}-\d_3\paraa{\d_1(Z)}
    =\d_1\paraa{Z\Wsq}-\d_3\paraa{iZ} \\
    &=\d_1(Z)\Wsq + Z\d_1(\Wsq)-iZ\Wsq=Z\d_1(W\Ws)=0.
  \end{align*}
  The remaining computations are carried out in the same manner,
  all giving $0$.
\end{proof}

\noindent
Next, let us construct a real metric calculus over $\Sthreet$. As
the metric module we choose the free module $M$ defined
in Proposition \ref{prop:M.free.module}, together with the
hermitian form
\begin{align*}
  h(U,V) = \sum_{a,b=1}^3(U^a)^\ast h_{ab}V^b
\end{align*}
where $U=E_aU^a$, $V=E_aV^a$ and 
\begin{align}\label{eq:Sthree.metric}
  (h_{ab}) =
  \begin{pmatrix}
    \Zsq & 0 & 0 \\
    0 & \Wsq & 0 \\
    0 & 0 & \Zsq\Wsq
  \end{pmatrix}.
\end{align}
(Note that $h$ is induced from the canonical metric on the
free module $(\Sthreet)^4$; i.e. $h_{ab}=\sum_{i=1}^4(E_a^i)^\ast E_b^i$,
where $E_a = e_iE_a^i$.) Furthermore, we let $\g$ be the (abelian) Lie algebra
generated by the derivations $\d_1,\d_2,\d_3$ (in Proposition
\ref{prop:Sthree.derivations}) and set $\varphi(\d_a)=E_a$ (and extend
it as a linear map over $\reals$).

\begin{proposition}
  $(M,h,\gphi)$ is a real metric calculus over $\Sthreet$.
\end{proposition}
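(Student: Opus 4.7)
The plan is to verify the four defining properties of a real metric calculus from Definition \ref{def:real.metric.calculus}. First I would check that $h$ is a hermitian form on $M$: additivity in the second argument and the identity $h(U,Va) = h(U,V)a$ follow formally from the definition $h(U,V) = \sum_{a,b}(U^a)^\ast h_{ab}V^b$ on the free module $M$, and the symmetry $h(U,V)^\ast = h(V,U)$ reduces by a direct sesquilinear expansion to the identity $h_{ab}^\ast = h_{ba}$. This in turn holds because the matrix \eqref{eq:Sthree.metric} is diagonal with self-adjoint entries: $\Zsq$ and $\Wsq$ are hermitian by definition, and their product is hermitian since $\Zsq$ and $\Wsq$ are central, hence commute, by Proposition \ref{prop:Sthree.zero.div}.

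Next I would verify non-degeneracy. Given $U = E_aU^a$ with $h(U,V) = 0$ for all $V \in M$, it suffices, since $M$ is generated by $E_1,E_2,E_3$, to test against $V = E_b$, which yields $(U^b)^\ast h_{bb} = 0$ (no sum, by diagonality). Taking adjoints gives
\begin{align*}
\Zsq U^1 = 0, \qquad \Wsq U^2 = 0, \qquad \Zsq\Wsq U^3 = 0.
\end{align*}
Proposition \ref{prop:Sthree.zero.div} immediately forces $U^1 = U^2 = 0$, and applying the same proposition twice to $\Zsq(\Wsq U^3) = 0$ gives $\Wsq U^3 = 0$ and then $U^3 = 0$. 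Since $\{E_1,E_2,E_3\}$ is a basis of $M$ by Proposition \ref{prop:M.free.module}, this shows $U = 0$.

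The remaining items are essentially bookkeeping: $\g$ is a real abelian Lie algebra of hermitian derivations by Proposition \ref{prop:Sthree.derivations}; $\varphi:\g\to M$ is $\reals$-linear by construction; and its image $\Mphi$ is the $\reals$-span of $\{E_1,E_2,E_3\}$, which generates $M$ as an $\Sthreet$-module since $\{E_a\}$ is a basis. Finally, for $E = E_a\lambda^a$ and $E' = E_b\mu^b$ with $\lambda^a, \mu^b \in \reals$, one has $h(E,E') = \lambda^a h_{ab} \mu^b$, whose hermiticity is again reduced to that of each $h_{ab}$ verified in the first step.

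There is no real obstacle to overcome—the proposition is a structural check that the construction just made realizes all of the axioms. The only point that requires algebraic input beyond definition-chasing is the non-degeneracy step, and that input is already isolated as Proposition \ref{prop:Sthree.zero.div} (regularity of $\Zsq$ and $\Wsq$ in $\Sthreet$).
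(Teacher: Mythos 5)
Your proof is correct and follows essentially the same route as the paper: non-degeneracy by pairing against the basis elements $E_a$ and invoking the regularity of $\Zsq$ and $\Wsq$ (Proposition \ref{prop:Sthree.zero.div}), generation of $M$ by $\Mphi$ from the construction, and hermiticity of $h(E,E')$ from the hermiticity (and centrality) of the diagonal entries $h_{ab}$. The only difference is that you spell out the routine hermitian-form axioms and the two-step application of regularity for $\Zsq\Wsq U^3=0$, which the paper leaves implicit.
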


\begin{proof}
  Let us first prove that $(M,h)$ is a metric module. From the
  definition of $h$, it is clear that $h$ is a hermitian form, and it
  remains to show that it is non-degenerate. Assume that $h(U,V)=0$
  for all $V\in M$. In particular, one may choose $V=E_a$, which gives
  \begin{align*}
    &0=h(U,E_1) = (U^1)^\ast h_{11} = (U^1)^\ast\Zsq\\
    &0=h(U,E_2) = (U^2)^\ast h_{22} = (U^2)^\ast\Wsq\\
    &0=h(U,E_3) = (U^3)^\ast h_{33} = (U^3)^\ast\Zsq\Wsq,
  \end{align*}
  and from Proposition \ref{prop:Sthree.zero.div} it follows that
  $U^1=U^2=U^3 = 0$. Hence, $h$ is non-degenerate, which shows that
  $(M,h)$ is a metric module.

  Moreover, it is clear that $\varphi(\g)$ generates $M$ since
  $E_a=\varphi(\d_a)$, for $a=1,2,3$, is in the image of
  $\varphi$. Finally, for $E,E'\in\Mphi$, it is easy to see that
  $h(E,E')$ is hermitian since $h_{ab}$ is central and hermitian
  (cf. (\ref{eq:Sthree.metric})).
\end{proof}

\noindent
Since $M$ is a free module, and $E_a=\varphi(\d_a)$ is a basis
for $M$, one may use Corollary~\ref{cor:connection.from.Kozul} to
construct a metric and torsion-free connection on $(M,h,\gphi)$.

\begin{proposition}\label{prop:S3.pseudo.Riemannian.calculus}
  There exists a (unique) connection $\nabla$ on $(M,h,\gphi)$ such that
  $(M,h,\gphi,\nabla)$ is a real pseudo-Riemannian calculus. The connection
  is given by
  \begin{alignat*}{3}
    &\nabla_1E_1 = -E_3 &\qquad& \nabla_1E_2 = 0&\qquad&\nabla_1E_3 = E_1\Wsq\\
    &\nabla_2E_1 = 0&\qquad&\nabla_2E_2=E_3&\qquad&\nabla_2E_3=-E_2\Zsq\\
    &\nabla_3E_1 = E_1\Wsq &\qquad& \nabla_3E_2=-E_2\Zsq&\qquad&\nabla_3E_3=E_3(\Wsq-\Zsq),
  \end{alignat*}
  where $\nabla_a\equiv\nablasub{\d_a}$. 

\end{proposition}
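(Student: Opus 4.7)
My plan is to apply Corollary~\ref{cor:connection.from.Kozul} to produce the connection, and to invoke the uniqueness theorem at the start of Section~\ref{sec:pseudo.riemannian.caculi} for the fact that it is the only candidate. Since $\{E_a = \varphi(\d_a)\}_{a=1}^3$ is a basis for $M$ by Proposition~\ref{prop:M.free.module}, and since $[\d_a,\d_b]=0$ by Proposition~\ref{prop:Sthree.derivations}, Kozul's formula \eqref{eq:Kozul.Uab} collapses to
\begin{align*}
2h(U_{ab},E_c) = \d_a h_{bc} + \d_b h_{ac} - \d_c h_{ab},
\end{align*}
so it suffices to verify that the $U_{ab} = \nabla_{\d_a}E_b$ listed in the proposition satisfy this identity for each $a,b,c \in \{1,2,3\}$.

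To evaluate the right-hand side I would first compute the action of the derivations on the diagonal metric entries. Using hermiticity of the $\d_a$ (so that $\d_a(Z^\ast)=(\d_a Z)^\ast$ and analogously for $W$) together with the centrality of $\Zsq,\Wsq$ from Proposition~\ref{prop:Sthree.zero.div}, a short direct calculation gives $\d_1(\Zsq)=\d_2(\Zsq)=\d_1(\Wsq)=\d_2(\Wsq)=0$, $\d_3(\Zsq)=2\Zsq\Wsq$ and $\d_3(\Wsq)=-2\Zsq\Wsq$, and hence also $\d_3(\Zsq\Wsq)=2\Zsq\Wsq(\Wsq-\Zsq)$. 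With these in hand the nine Kozul identities become a routine finite check. For instance, with $(a,b)=(1,1)$ the diagonal form of $h$ collapses the right-hand side to $-\d_c\Zsq$, which matches $2h(-E_3,E_c)=-2h_{3c}$ precisely and so confirms $\nabla_{\d_1}E_1 = -E_3$; the remaining eight cases are entirely analogous.

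What is left is to show that the resulting calculus is \emph{real} in the sense of Definition~\ref{def:real.pseudo.Riemannian.calculus}. Writing $\nabla_{\d_a}E_b = E_c\Gamma^c_{ab}$, the explicit formulas reveal that every $\Gamma^c_{ab}$ lies in the hermitian part of $\ZA$, being one of $0,\pm\mid,\Wsq,-\Zsq,\Wsq-\Zsq$. Since hermitian derivations preserve both centrality and hermiticity, Leibniz' rule applied twice yields
\begin{align*}
\nabla_{\d_a}\nabla_{\d_b}E_c = E_e\paraa{\Gamma^e_{af}\Gamma^f_{bc}+\d_a(\Gamma^e_{bc})},
\end{align*}
whose coefficient again sits in the hermitian center; pairing with $E_d$ then produces a product of central hermitian elements, which is hermitian. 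By $\reals$-linearity in both slots this covers all of $\Mphi\times\Mphi$.

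The main obstacle is the bookkeeping in the nine-case Kozul verification, but nothing about it is conceptually subtle; the whole argument is powered by the observation that every Christoffel symbol and every metric component of this calculus lies in the hermitian center of $\Sthreet$.
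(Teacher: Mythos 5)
Your proposal is correct and follows essentially the same route as the paper: existence via Corollary~\ref{cor:connection.from.Kozul} by checking Kozul's formula (simplified by $[\d_a,\d_b]=0$ and the diagonal metric, using the correct derivative formulas for $\Zsq$, $\Wsq$), uniqueness from the Levi-Civita-type theorem, and realness from the fact that the connection coefficients $\Gamma^c_{ab}$ are central and hermitian and remain so under the hermitian derivations. Your expansion of $\nabla_a\nabla_b E_c$ fully into Christoffel symbols is only a cosmetic variation of the paper's argument, which instead pairs $\nabla_a E_r$ with $E_q$ directly and uses that this is already hermitian in a pseudo-Riemannian calculus.
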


\begin{proof}
  It is clear that $(M,h,\gphi)$ satisfies the prerequisites of
  Corollary~\ref{cor:connection.from.Kozul}. Furthermore, it is a
  straightforward exercise to check that $U_{ab}=\nabla_aE_b$
  satisfy equation \eqref{eq:Kozul.Uab}, which then implies that there
  exists a connection $\nabla$ on $(M,\g)$, given by $\nabla_aE_b$
  above, such that $(M,h,\gphi)$ is a pseudo-Riemannian calculus.

  Let us now show that the pseudo-Riemannian calculus is real; i.e,
  that the elements $h(\nabla_a\nabla_bE_p,E_q)$ are hermitian for all
  $a,b,p,q\in\{1,2,3\}$. We introduce the connection coefficients
  $\Gamma_{ab}^c\in\Sthreet$ through
  \begin{align*}
    \nabla_aE_b = E_c\Gamma_{ab}^c,
  \end{align*}
  and note that $\Gamma_{ab}^c$ is central and hermitian for all
  $a,b,c\in\{1,2,3\}$. It follows that
  \begin{align*}
    &\nabla_a\nabla_bE_p = \nabla_a\paraa{E_r\Gamma_{bp}^r}
    =\paraa{\nabla_aE_r}\Gamma_{bp}^r + E_r\d_a\Gamma_{bp}^r\implies\\
    &h(\nabla_a\nabla_bE_p,E_q) = h(\nabla_aE_r,E_q)\Gamma^{r}_{bp}
    +h(E_r,E_q)\paraa{\d_a\Gamma^r_{bp}}.
  \end{align*}
  Since $(M,h,\gphi,\nabla)$ is a pseudo-Riemannian calculus and
  $\Gamma^r_{bp}$ is central and hermitian, it follows that the first
  term is hermitian. Furthermore, since $\d_a$ is a hermitian
  derivation, and the derivative of a central element is again
  central, also the second term is hermitian. This shows that 
  $h(\nabla_a\nabla_bE_p,E_q)$ is hermitian and, hence, that
  $(M,h,\gphi,\nabla)$ is a real pseudo-Riemannian calculus.
\end{proof}

\noindent Let us proceed to compute the curvature of
$(M,h,\gphi,\nabla)$. \noindent Recall that since the
pseudo-Riemannian calculus is real,
Proposition~\ref{prop:curvature.symmetries} implies that the curvature
operator has all the classical symmetries.

\begin{proposition}
  The curvature of the pseudo-Riemannian calculus $(M,h,\gphi,\nabla)$
  over $\Sthreet$ is given by
  \begin{alignat*}{3}
    &\R{1}{2}{1} = -E_2\Zsq &\quad
    &\R{1}{2}{2} = E_1\Wsq &\quad
    &\R{1}{2}{3} = 0\\
    &\R{1}{3}{1} = -E_3\Zsq&
    &\R{1}{3}{2} = 0&
    &\R{1}{3}{3} = E_1\Zsq\Wsq\\
    &\R{2}{3}{1} = 0 &
    &\R{2}{3}{2} = -E_3\Wsq & 
    &\R{2}{3}{3} = E_2\Zsq\Wsq,
  \end{alignat*}
  from which it follows that the nonzero curvature components can be
  obtained from
  \begin{align*}
    R_{1212} = \Zsq\Wsq\qquad
    R_{1313} = \paraa{\Zsq}^2\Wsq\qquad
    R_{2323} = \Zsq\paraa{\Wsq}^2.
  \end{align*}
  Moreover, the (unique) scalar curvature is given by $S=6\cdot\mid$.
\end{proposition}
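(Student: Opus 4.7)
The plan is to leverage the explicit connection formulas of Proposition~\ref{prop:S3.pseudo.Riemannian.calculus} together with the fact, from Proposition~\ref{prop:Sthree.derivations}, that the derivations $\d_1,\d_2,\d_3$ pairwise commute. Since $[\d_a,\d_b]=0$, the curvature operator simplifies to
\begin{align*}
  R(\d_a,\d_b)E_c = \nabla_a\nabla_b E_c - \nabla_b\nabla_a E_c,
\end{align*}
so the nine tabulated components reduce to a finite bookkeeping exercise. For each pair $(a,b)$ with $a<b$ and each $c\in\{1,2,3\}$, I would expand $\nabla_a(\nabla_b E_c)$ via the Leibniz rule, using $\nabla_b E_c = E_r\Gamma^{r}_{bc}$ with $\Gamma^{r}_{bc}\in\{0,\pm\mid,\Zsq,\Wsq,\Wsq-\Zsq\}$ as read off from the proposition.

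The one subtlety I would flag up front is that although $\d_1(\Zsq)=\d_1(\Wsq)=\d_2(\Zsq)=\d_2(\Wsq)=0$, the derivation $\d_3$ acts non-trivially on the metric coefficients: from the explicit action of $\d_3$ on generators one computes $\d_3(\Zsq) = 2\Zsq\Wsq$ and $\d_3(\Wsq) = -2\Zsq\Wsq$. These terms are essential for the $R(\d_1,\d_3)E_3$ and $R(\d_2,\d_3)E_3$ entries, where for example
\begin{align*}
  \nabla_1\nabla_3 E_3 - \nabla_3\nabla_1 E_3
  = E_1\Wsq(\Wsq-\Zsq) - E_1\Wsq\Wsq + 2E_1\Zsq\Wsq
  = E_1\Zsq\Wsq,
\end{align*}
matching the claim. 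All other compositions are clean because the relevant $\Gamma$'s lie in the center and are annihilated by $\d_1$ or $\d_2$. The components $R_{abpq}=h(E_a,R(\d_p,\d_q)E_b)$ then follow immediately from the diagonal metric \eqref{eq:Sthree.metric} and the centrality of its entries.

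For the scalar curvature, I would first produce a concrete pseudo-inverse of $h_{ab}$. Taking $H = \Zsq\Wsq$ and $\hhat^{ab} = \operatorname{diag}(\Wsq,\Zsq,\mid)$ verifies $\hhat^{ab}h_{bc} = h_{cb}\hhat^{ba} = \delta^a_c H$ directly, while Proposition~\ref{prop:Sthree.zero.div} ensures that $H$ is central and regular (if $\Zsq\Wsq a = 0$ then $\Wsq a = 0$ hence $a=0$). By Proposition~\ref{prop:H.central.unique.scalar.curvature} the resulting scalar curvature is then the unique one.

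The final step is the contraction $\hhat^{ab}R_{apbq}\hhat^{pq}$. Since $\hhat^{ab}$ is diagonal, this collapses to $\sum_{a,p}\hhat^{aa}R_{apap}\hhat^{pp}$, and the symmetries \eqref{eq:R.sym.comp.trivial}--\eqref{eq:R.sym.comp.interchange.pairs} show that the six nonzero terms are the three listed components and their $(ab)\leftrightarrow(pq)$ partners. A direct computation, using the centrality of $\Zsq$ and $\Wsq$, gives that each of the six contributions equals $(\Zsq)^2(\Wsq)^2 = H^2$, yielding
\begin{align*}
  \hhat^{ab}R_{apbq}\hhat^{pq} = 6H^2 = H(6\cdot\mid)H,
\end{align*}
so regularity of $H$ forces $S = 6\cdot\mid$. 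I do not foresee a genuine obstacle here; the mildly delicate point is keeping track of the $\d_3$-derivatives of $\Zsq,\Wsq$ when verifying the curvature entries involving the third direction, after which the contraction is essentially automatic.
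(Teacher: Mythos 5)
Your proposal is correct and follows essentially the same route as the paper: direct computation of $R(\d_a,\d_b)E_c$ from the explicit connection (with the key point that $\d_3(\Zsq)=2\Zsq\Wsq$, $\d_3(\Wsq)=-2\Zsq\Wsq$ while $\d_1,\d_2$ annihilate them), reading off $R_{abpq}$ from the diagonal central metric, and contracting with the pseudo-inverse $(\diag(\Wsq,\Zsq,\mid),\Zsq\Wsq)$ to get $6H^2=H(6\cdot\mid)H$, with uniqueness from Proposition~\ref{prop:H.central.unique.scalar.curvature}. Your sample check of $R(\d_1,\d_3)E_3$ is the same computation the paper exhibits, so there is nothing methodologically new to compare.
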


\begin{proof}
  First, it is straightforward to compute $R(\d_a,\d_b)E_c$ by using
  the results in
  Proposition~\ref{prop:S3.pseudo.Riemannian.calculus}. For instance
  (recall that $[\d_a,\d_b]=0$)
  \begin{align*}
    R(\d_1,\d_3)E_3 &= \nabla_1\nabla_3E_3-\nabla_3\nabla_1E_3
    =\nabla_1\paraa{E_3(\Wsq-\Zsq)}-\nabla_3\paraa{E_1\Wsq}\\
    &=\paraa{\nabla_1E_3}(\Wsq-\Zsq)-\paraa{\nabla_3E_1}\Wsq-E_1\d_3\Wsq\\
    &=E_1\Wsq\paraa{\Wsq-\Zsq}-E_1(\Wsq)^2-E_1\paraa{-2\Wsq\Zsq}\\
    &= E_1\Zsq\Wsq.
  \end{align*}
  The components are easily computed as well; e.g.
  \begin{align*}
    R_{1212} = h\paraa{E_1,R(d_1,d_2)E_2} 
    =h(E_1,E_1\Wsq)=h(E_1,E_1)\Wsq= \Zsq\Wsq
  \end{align*}
  and
  \begin{align*}
    R_{1223} = h\paraa{E_1,R(\d_2,\d_3)E_2} = h(E_1,-E_3\Wsq) =
    -h(E_1,E_3)\Wsq = 0. 
  \end{align*}
  Computing $R_{abcd}$ for $a,b,c,d\in\{1,2,3\}$ (using the symmetries in
  Proposition~\ref{prop:curvature.symmetries} to reduce the number of
  computations that need to be performed) gives $R_{1212}$, $R_{1313}$
  and $R_{2323}$ (together with the ones obtained by symmetry from
  these) as the only nonzero components.

  Finally, let us show that there is a unique scalar curvature. The
  metric $h_{ab}$ has a pseudo-inverse $(\hhat^{ab},H)$, given by
  \begin{align*}
    (\hhat^{ab}) =
    \begin{pmatrix}
      \Wsq & 0 & 0 \\
      0 & \Zsq & 0\\
      0 & 0 & \mid
    \end{pmatrix}
    \quad\text{and}\quad H=\Zsq\Wsq.
  \end{align*}
  From the computation
  \begin{align*}
    &\qquad\hhat^{ab}R_{apbq}\hhat^{pq} = 
    \hhat^{11}R_{1p1q}\hhat^{pq}+
    \hhat^{22}R_{2p2q}\hhat^{pq}+
    \hhat^{33}R_{3p3q}\hhat^{pq}=\\
    &\hhat^{11}\paraa{R_{1212}\hhat^{22}+R_{1313}\hhat^{33}}+
    \hhat^{22}\paraa{R_{2121}\hhat^{11}+R_{2323}\hhat^{33}}+
    \hhat^{33}\paraa{R_{3131}\hhat^{11}+R_{3232}\hhat^{22}}\\
    &= 2\Wsq\Zsq\Wsq\Zsq+2\Wsq(\Zsq)^2\Wsq+2\Zsq\Zsq(\Wsq)^2= H(6\cdot\mid)H,
  \end{align*}
  one concludes that the scalar curvature with respect to
  $(\hhat^{ab},H)$ is given by $6\cdot\mid$.  Since $H$ is central, it
  follows from
  Proposition~\ref{prop:H.central.unique.scalar.curvature} that this
  is indeed the unique scalar curvature of $(M,h,\gphi,\nabla)$.
\end{proof}

\subsection{Aspects of localization on $\Sthreet$}
\label{sec:global.vs.local}

\noindent In classical geometry, the projection onto the normal space
of $S^3$ is given by the map
\begin{align*}
  \Pi(U)^i = \Pi^{ij}U^j = x^ix^jU^j, 
\end{align*}
where $x^1,\ldots,x^4$ are the embedding coordinates of $S^3$ into
$\reals^4$, satisfying
\begin{align*}
  (x^1)^2+(x^2)^2+(x^3)^2+(x^4)^2=1. 
\end{align*}
Hence, (sections of) the tangent bundle may be
identified with the projective module
\begin{align*}
  TS^3 = \P\paraa{C^\infty(S^3)^4}
\end{align*}
where $\P = \mid-\Pi$, giving $TS^3$ as a subspace of $T\reals^4$. It
is well known that $S^3$ is parallelizable, which means that $TS^3$
is a free module, and one may explicitly give a basis of (global) vector
fields as:
\begin{align*}
  v_1 = (-x^4,x^3,-x^2,x^1)\quad
  v_2 = (-x^3,-x^4,x^1,x^2)\quad
  v_3 = (-x^2,x^1,x^4,-x^3).
\end{align*}
The (global) vector fields $E_1,E_2,E_3$ 
\begin{align*}
  &E_1 = (-x^2,x^1,0,0)\qquad
  E_2 = (0,0,-x^4,x^3)\\
  &E_3 = \paraa{x^1|w|^2,x^2|w|^2,-x^3|z|^2,-x^4|z|^2}
\end{align*}
as defined in Section~\ref{sec:hopf.coordinates} are linearly independent
at every point where $|z|^2=(x^1)^2+(x^2)^2\neq 0$ and
$|w|^2=(x^3)^2+(x^4)^2\neq 0$, which can easily be seen by computing the
determinant
\begin{align*}
  \begin{vmatrix}
    -x^2 & x^1 & 0 & 0\\
    0 & 0 & -x^4 & x^3\\
    x^1|w|^2 & x^2|w|^2 & -x^3|z|^2 & -x^4|z|^2\\
    x^1 & x^2 & x^3 & x^4
  \end{vmatrix}
  =-|z|^2|w|^2,
\end{align*}
giving a condition for $E_1,E_2,E_3,\vec{n}=(x^1,x^2,x^3,x^4)$ to be
linearly independent. Thus, the vector fields $E_1,E_2,E_3$ provide a
globalization of the corresponding vector fields in the local chart
defined by the Hopf coordinates, and one may use them for
computations, keeping in mind that they do not span the tangent space
at points $(x^1,x^2,x^3,x^4)\in S^3$ where $x^1=x^2=0$ or
$x^3=x^4=0$. However, in this case, the set of points on $S^3$ which are not
covered by this chart has measure zero, which implies that certain
results, e.g. results involving integration over the manifold, is not
sensitive to the difference between $\{E_1,E_2,E_3\}$ and $\{v_1,v_2,v_3\}$. 

Returning to the noncommutative 3-sphere $\Sthreet$, it is easy to
check that since
\begin{align*}
    (X^1)^2+(X^2)^2+(X^3)^2+(X^4)^2=Z\Zs + W\Ws=\mid
\end{align*}
the above situation allows for a straightforward
generalization. Namely, for $U=e_iU^i\in\paraa{\Sthreet}^4$ one
defines the map $\P:(\Sthreet)^4\to(\Sthreet)^4$ as
\begin{align*}
  \P(U) = \sum_{i,j=1}^4 e_i\P^{ij}U^j
\end{align*}
where $\P^{ij} = \delta^{ij}\mid-X^iX^j$, and it is easy to check that
$\P^2(U)=\P(U)$. Hence, $T\Sthreet=\P\paraa{(\Sthreet)^4}$ is a
projective module in close analogy with the module of vector fields on
$S^3$.  Let us now further study the structure of $T\Sthreet$. We
start by proving the following lemma.
\begin{lemma}\label{lemma:X.lemma}
In $\Sthreet$
\begin{align}
  &X^2X^4+X^1X^3=q\paraa{X^4X^2+X^3X^1}\label{eq:X.lemma.1}\\
  &X^2X^4-X^1X^3=\qb\paraa{X^4X^2-X^3X^1}\label{eq:X.lemma.2}\\
  &X^2X^3+X^1X^4=\qb\paraa{X^3X^2+X^4X^1}\label{eq:X.lemma.3}\\
  &X^2X^3-X^1X^4=q\paraa{X^3X^2-X^4X^1}\label{eq:X.lemma.4}.
\end{align}
\end{lemma}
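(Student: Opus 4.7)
The plan is to derive the four identities by rewriting each defining relation of $\Sthreet$ in terms of the hermitian generators $X^1,X^2,X^3,X^4$. The starting point is the inverse substitution $Z=X^1+iX^2$, $Z^\ast=X^1-iX^2$, $W=X^3+iX^4$, $W^\ast=X^3-iX^4$, which I plug into the four relations from \eqref{eq:Sthree.def}:
\begin{equation*}
 WZ=qZW,\quad\Ws Z=\qb Z\Ws,\quad W\Zs=\qb\Zs W,\quad \Ws\Zs=q\Zs\Ws.
\end{equation*}
After expansion, each becomes an equation of the form $(\star)+i(\star\star)=\lambda[(\sharp)+i(\sharp\sharp)]$ where $\star,\star\star,\sharp,\sharp\sharp$ are sums or differences of $X^iX^j$; for instance $WZ=qZW$ produces
\begin{equation*}
 (X^3X^1-X^4X^2)+i(X^3X^2+X^4X^1)=q\bracketa{(X^1X^3-X^2X^4)+i(X^1X^4+X^2X^3)}.
\end{equation*}

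Next, I would exploit the fact that two of the four relations are adjoints of the other two: taking $\ast$ of $WZ=qZW$ yields $\Ws\Zs=q\Zs\Ws$, and similarly the second pair is related by adjunction. Adding the first and fourth relations (expanded in $X^i$) cancels the $i$-terms and produces $X^3X^1-X^4X^2=q(X^1X^3-X^2X^4)$; multiplying by $\qb$ (and using $|q|=1$) rearranges this into \eqref{eq:X.lemma.2}. Subtracting the fourth from the first cancels the real parts and yields $X^3X^2+X^4X^1=q(X^1X^4+X^2X^3)$, which after multiplying by $\qb$ becomes \eqref{eq:X.lemma.3}.

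The remaining two identities come analogously from the second pair: adding $\Ws Z=\qb Z\Ws$ and $W\Zs=\qb\Zs W$ (in expanded form) gives $X^3X^1+X^4X^2=\qb(X^1X^3+X^2X^4)$, and after multiplication by $q$ this is exactly \eqref{eq:X.lemma.1}; subtracting yields $X^3X^2-X^4X^1=\qb(X^2X^3-X^1X^4)$, which rearranges to \eqref{eq:X.lemma.4}. So each of the four identities is obtained from exactly one of the combinations (R1)$\pm$(R4) and (R2)$\pm$(R3).

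The proof is entirely a direct computation, so there is no real conceptual obstacle; the only thing to be careful about is the bookkeeping of signs and factors of $i$ when expanding the eight products $X^iX^j$, and noting that rearranging $q$-prefactors requires $q\qb=1$. I would probably present the argument by writing out the expansion of $WZ=qZW$ in full (as above) and stating that the three remaining expansions are obtained either by the involution $Z\leftrightarrow\Zs,W\leftrightarrow\Ws$ (which conjugates $q$) or by adjunction, then listing the four sum/difference combinations that yield \eqref{eq:X.lemma.1}--\eqref{eq:X.lemma.4}.
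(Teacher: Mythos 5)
Your proposal is correct and amounts to essentially the same direct computation as the paper's proof: the paper verifies each identity by writing the relevant combination of the $X^i$ as, e.g., $\tfrac{1}{2i}\left(ZW-Z^\ast W^\ast\right)$ and then applying the defining relations of $\Sthreet$, whereas you expand the defining relations in terms of the $X^i$ and take the sum and difference of each conjugate pair of relations --- the same algebra organized in the opposite direction, with the $q$-bookkeeping and the four sum/difference combinations checking out exactly as you state.
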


\begin{proof}
  The proof is done by straightforward computations; e.g.
  \begin{align*}
    X^2X^3+X^1X^4 &= \frac{1}{2i}(Z-\Zs)\frac{1}{2}(W+\Ws)
    +\frac{1}{2}(Z+\Zs)\frac{1}{2i}(W-\Ws)\\
    &=\frac{1}{2i}\paraa{ZW-\Zs\Ws}=\frac{\qb}{2i}\paraa{WZ-\Ws\Zs}\\
    &=\frac{1}{2}(W+\Ws)\frac{1}{2i}(Z-\Zs)
    +\frac{1}{2i}(W-\Ws)\frac{1}{2}(Z+\Zs)\\
    &=\qb\paraa{X^3X^2+X^4X^1},
  \end{align*}
  and the remaining computations are completely analogous. 
\end{proof}

\noindent
The next statement corresponds to the fact that $S^3$ is a
parallelizable manifold.

\begin{proposition}
  The (right) $\Sthreet$-module $T\Sthreet$ is a free module with
  basis
  \begin{align*}
    F_1 &= (-X^4,X^3,-qX^2,qX^1)\\
    F_2 &= (-X^3,-X^4,qX^1,qX^2)\\
    F_3 &= (-X^2,X^1,X^4,-X^3).
  \end{align*}
\end{proposition}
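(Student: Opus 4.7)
The plan is to introduce the hermitian pairing $\langle U,V\rangle=\sum_{i=1}^4(U^i)^\ast V^i$ on the ambient free module $(\Sthreet)^4$ and the ``normal'' element $\vec{n}=(X^1,X^2,X^3,X^4)$, and then to prove that the four-tuple $\{F_1,F_2,F_3,\vec{n}\}$ is orthonormal and complete with respect to this pairing. Combined with the fact that $\vec{n}$ is already in the image of the ``normal'' projection $\mid-\P$ (since $\P^{ij}X^j=X^i-X^i\sum_j(X^j)^2=0$), this yields a direct-sum decomposition $(\Sthreet)^4=T\Sthreet\oplus\vec{n}\,\Sthreet$ in which $\{F_1,F_2,F_3\}$ serves as a free basis of $T\Sthreet$.

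The first step is to verify $\langle F_a,F_b\rangle=\delta_{ab}\mid$, $\langle F_a,\vec{n}\rangle=0$, and $\langle\vec{n},\vec{n}\rangle=\mid$. Each diagonal entry $\langle F_a,F_a\rangle$ reduces to $\Zsq+\Wsq=\mid$ using $q\qb=1$; the off-diagonal entries and the $\langle F_a,\vec{n}\rangle$ entries vanish by precisely the four identities of Lemma \ref{lemma:X.lemma} (the $q$-factors in the definition of the $F_a$ are placed exactly so that the relevant terms cancel, together with $[X^1,X^2]=[X^3,X^4]=0$). From $\langle\vec{n},F_a\rangle=\sum_jX^jF_a^j=0$ and the definition of $\P$ it follows that $\P(F_a)=F_a$, so each $F_a\in T\Sthreet$. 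Linear independence is then immediate: applying $\langle F_b,\cdot\rangle$ to $F_1a_1+F_2a_2+F_3a_3=0$ yields $a_b=0$.

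For the generating property, given $U\in T\Sthreet$ I set $\alpha^a=\langle F_a,U\rangle$ and claim that $U=F_1\alpha^1+F_2\alpha^2+F_3\alpha^3$. Written out componentwise this reduces to the completeness identity
\begin{align*}
  \sum_{a=1}^3F_a^i(F_a^j)^\ast=\delta^{ij}\mid-X^iX^j=\P^{ij},
\end{align*}
after which the right-hand side becomes $\sum_j\P^{ij}U^j=\P(U)^i=U^i$, since $\P(U)=U$ for $U\in T\Sthreet$. Equivalently, if one arranges $F_1,F_2,F_3,\vec{n}$ as the columns of a matrix $\Phi\in M_4(\Sthreet)$, then orthonormality says $\Phi^\ast\Phi=I_4$ and completeness says $\Phi\Phi^\ast=I_4$; together these make $\Phi$ invertible, and the splitting into the $F_a$-part and the $\vec{n}$-part gives the claim.

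The principal obstacle is the completeness identity above. It is a $4\times 4$ matrix identity; hermiticity of both sides cuts the sixteen cases to ten, and the four diagonal entries reduce directly to $\Zsq+\Wsq=\mid$. Each of the remaining six off-diagonal entries invokes exactly one of the identities \eqref{eq:X.lemma.1}--\eqref{eq:X.lemma.4} together with the centrality of $\Zsq$ and $\Wsq$, and it is here that the precise $q$ and $\qb$ factors in $F_1$ and $F_2$ become essential: without them, the $q$-deformation would break the cancellations that are free in the commutative case.
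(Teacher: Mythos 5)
Your proposal is correct, and it overlaps heavily with the paper's proof while packaging the argument differently. The membership step is identical: $\sum_j X^jF_a^j=0$, checked from \eqref{eq:X.lemma.1}, \eqref{eq:X.lemma.4} and $[X^1,X^2]=[X^3,X^4]=0$. Your completeness identity $\sum_{a}F_a^i(F_a^j)^\ast=\P^{ij}$ is exactly the paper's generation step in disguise: the explicit expansions given there are precisely $\P(e_i)=\sum_a F_a(F_a^i)^\ast$, i.e. the columns of your identity, so the computational content coincides (the entries do follow from \eqref{eq:X.lemma.1}--\eqref{eq:X.lemma.4}, except that the $(1,2)$ and $(3,4)$ entries need only the commutators $[X^1,X^2]=[X^3,X^4]=0$, a cosmetic imprecision in your bookkeeping). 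Where you genuinely depart from the paper is linear independence: the paper eliminates the coefficients by hand, multiplying the four component equations of $F_1a+F_2b+F_3c=0$ by suitable $X^i$ and summing, again via Lemma~\ref{lemma:X.lemma}; you instead prove the extra orthonormality relations $\langle F_a,F_b\rangle=\delta_{ab}\mid$, $\langle F_a,\vec n\rangle=0$, $\langle\vec n,\vec n\rangle=\mid$ (which I verified do hold, by the same four identities and the commutators) and read off independence by pairing with $F_b$. This costs a handful of additional identities of the same type, but it buys a cleaner global statement: the matrix $\Phi$ with columns $F_1,F_2,F_3,\vec n$ satisfies $\Phi^\ast\Phi=\Phi\Phi^\ast=I_4$, giving the orthogonal splitting $(\Sthreet)^4=T\Sthreet\oplus\vec n\,\Sthreet$ and making both freeness and the projection structure transparent, whereas the paper's elimination is more elementary but ad hoc. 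So: correct, same key lemma and computations, with a more systematic ``unitary frame'' formulation of the freeness part.
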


\begin{proof}
  Let us start by showing that $\Pi(F_a)=0$, which implies that
  $F_a\in T\Sthreet$. Since $\Pi^{ij}=X^iX^j$, it is enough to show
  that $X^iF_a^i=0$ for $a=1,2,3$:
  \begin{align*}
    &X^iF_1^i = -X^1X^4+X^2X^3-qX^3X^2+qX^4X^1=0\\
    &X^iF_2^i = -X^1X^3-X^2X^4+qX^3X^1+qX^4X^2=0\\
    &X^iF_3^i = -X^1X^2+X^2X^1+X^3X^4-X^4X^3=0,
  \end{align*}
  by using \eqref{eq:X.lemma.4}, \eqref{eq:X.lemma.1} in
  Lemma~\ref{lemma:X.lemma}, and the fact that
  $[X^1,X^2]=[X^3,X^4]=0$.

  Next, we show that $F_1,F_2,F_3$ generate $T\Sthreet$; it is
  sufficient to show that $\P(e_i)$ (where $\{e_i\}_{i=1}^4$ denotes
  the canonical basis of $(\Sthreet)^4$) can be written as linear
  combination of $F_1,F_2,F_3$, for $i=1,2,3,4$. In fact, one can show that
  \begin{align*}
    &\P(e_1) = \paraa{\mid-(X^1)^2,-X^2X^1,-X^3X^1,-X^4X^1}
    =-F_1X^4-F_2X^3-F_3X^2\\
    &\P(e_2) = \paraa{-X^1X^2,\mid-(X^2)^2,-X^3X^2,-X^4X^2}
    =F_1X^3-F_2X^4+F_ 3X^1\\
    &\P(e_3) = \paraa{-X^1X^3,-X^2X^3,\mid-(X^3)^2,X^4X^3}
    =-\qb F_1X^2+\qb F_ 2X^1+F_3X^4\\
    &\P(e_4) = \paraa{-X^1X^4,-X^2X^4,-X^3X^4,\mid-(X^4)^2}
    =\qb F_1X^1 + \qb F_ 2X^2-F_ 3X^3.
  \end{align*}
  For instance,
  \begin{align*}
    -F_1X^4-F_2X^3-F_3X^2&
    =\paraa{(X^2)^2+(X^3)^2+(X^4)^2,-X^3X^4+X^4X^3-X^1X^2,\\
    &qX^2X^4-qX^1X^3-X^4X^2,-qX^1X^4-qX^2X^3+X^3X^2}\\
    &= \paraa{\mid-(X^1)^2,-X^2X^1,-X^3X^1,-X^4X^1}=\P(e_1),
  \end{align*}
  by using \eqref{eq:X.lemma.2}, \eqref{eq:X.lemma.3} (in the third
  and fourth component, respectively) and the fact
  that $[X^1,X^2]=[X^3,X^4]=0$. Finally, let us show that
  $F_1,F_2,F_3$ are free generators. For $a,b,c\in\Sthreet$, we assume that
  \begin{align*}
    F_1a + F_2b + F_3c = 0,
  \end{align*}
  which is equivalent to
  \begin{align*}
    \begin{cases}
      -X^4a-X^3b-X^2c = 0\\
      X^3a-X^4b+X^1c = 0\\
      -qX^2a+qX^1b+X^4c = 0\\
      qX^1a+qX^2b-X^3c = 0.
    \end{cases}
  \end{align*}
  Multiplying these equations (from the left) by $-X^2$, $X^1$, $X^4$
  and $-X^3$, respectively, and summing them yields $c=0$, by using 
  \eqref{eq:X.lemma.1} and \eqref{eq:X.lemma.4}. Setting $c=0$ in the
  above equations gives
  \begin{align*}
    &X^4a+X^3b = 0\qquad X^3a-X^4b = 0\\
    -&X^2a+X^1b=0\qquad X^1a+X^2b = 0,
  \end{align*}
  which implies that
  \begin{align*}
    (X^4)^2a = -X^4X^3b\qquad (X^3)^2a=X^3X^4b\\
    (X^2)^2a=X^2X^1b\qquad (X^1)^2a = -X^1X^2b.
  \end{align*}
  Summing these equations gives $a=0$, which then (via a similar argument) implies that
  $b=0$. This shows that $F_1,F_2,F_3$ are linearly independent.
\end{proof}

\noindent
It is easy to check that the elements $E_1,E_2,E_3$, as defined in
\eqref{Sthree.Evec.def}, fulfill $\P(E_a)=E_a$ for $a=1,2,3$, implying
that they are elements of $T\Sthreet$. Hence, the module $M$, of the
pseudo-Riemannian calculus for $\Sthreet$, is a submodule of
$T\Sthreet$, providing a noncommutative analogue of the globalization of the local
vector fields in the Hopf coordinates as described in the beginning of
the section.

As is well known, every projective module comes equipped with a
canonical affine connection; namely, the module $(\Sthreet)^4$ has an
affine connection, given by
\begin{align*}
  \nablab_dV = e_id(V^i) 
\end{align*}
where $V=e_iV^i\in(\Sthreet)^4$ and $d\in\Der(\Sthreet)$, and it
follows that 
\begin{align*}
  \nablah_dV = \P(\nablab_dV)
\end{align*}
is an affine connection on $T\Sthreet$. Since we have argued in
  analogy with differential geometry, where $M$ is a sub-module of
  $TS^3$ and the connection on $M$ is merely the restriction of the
  connection on $TS^3$, it is natural to ask if the connection
  $\nablah$ (restricted to $M$) coincides with $\nabla$ (as given by
  the pseudo-Riemannian calculus over $M$).

\begin{proposition}
  Let $(M,h,\gphi,\nabla)$ be the pseudo-Riemannian calculus over
  $\Sthreet$ introduced in Section
  \ref{sec:pseudo.Riemannian.calculus.Sthreet}.  The affine connection
  $\nablah_dU=\P(\nablab_d U)$, restricted to $M\subseteq T\Sthreet$, coincides
  with $\nabla$; i.e, $\nablah_d U = \nabla_dU$ for
  $d\in\g$ and $U\in M$.
\end{proposition}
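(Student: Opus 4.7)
The plan is to invoke the uniqueness of the torsion-free metric connection proved earlier in the paper: once one checks that $\nablah$, restricted to $M$, yields a pseudo-Riemannian calculus on $(M, h, \gphi)$, it must coincide with $\nabla$. Three things need to be verified: (a) $\nablah_d U \in M$ for $U \in M$ and $d \in \g$, (b) torsion-freeness, and (c) metricity.

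For (a), Leibniz together with $\P(E_b)=E_b$ reduces matters to whether $\nablah_{\d_a}E_b \in M$ for $a,b \in \{1,2,3\}$. I would compute $\nablab_{\d_a}E_b$ componentwise, using the formulas in Proposition~\ref{prop:Sthree.derivations} for the action of $\d_a$ on $Z,W$ (and hence on the $X^i$), and then apply $\P(V)^i = V^i - X^i\sum_j X^j V^j$. A representative case: $\nablab_{\d_1}E_1 = (-X^1,-X^2,0,0)$, for which $\sum_j X^j V^j = -\Zsq$, giving $\P(\nablab_{\d_1}E_1) = (-X^1\Wsq,-X^2\Wsq, X^3\Zsq, X^4\Zsq) = -E_3$, matching $\nabla_{\d_1}E_1$ from Proposition~\ref{prop:S3.pseudo.Riemannian.calculus}. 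The remaining eight cases are entirely analogous. For (b), since $[\d_a,\d_b]=0$ torsion-freeness reduces to $\nablah_{\d_a}E_b = \nablah_{\d_b}E_a$, which is immediate from the same computations.

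The more conceptual step is (c), and it rests on the observation that a vector $V \in (\Sthreet)^4$ lies in $T\Sthreet$ if and only if $\sum_i X^i V^i = 0$: indeed, $\P(V)=V$ means $X^i\paraa{\sum_j X^j V^j} = 0$ for every $i$, and multiplying by $X^i$, summing, and using $\sum_i (X^i)^2 = \mid$ gives $\sum_j X^j V^j = 0$. Consequently, for any $W \in (\Sthreet)^4$ and $V \in T\Sthreet$,
\begin{align*}
  \bar{h}\paraa{(\mid - \P)W, V} = \sum_{i,j} (W^j)^\ast X^j X^i V^i = \sum_j (W^j)^\ast X^j \sum_i X^i V^i = 0,
\end{align*}
where $\bar{h}(U,V) = \sum_i(U^i)^\ast V^i$ is the canonical form on $(\Sthreet)^4$. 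Since $\bar{h}$ restricts to $h$ on $M$ and $\nablab$ is manifestly metric for $\bar{h}$ (being componentwise hermitian differentiation), for $U,V \in M$ one obtains
\begin{align*}
  d\,h(U,V) = \bar{h}(\nablab_d U, V) + \bar{h}(U, \nablab_d V) = h(\nablah_d U, V) + h(U, \nablah_d V),
\end{align*}
where the second equality uses the orthogonality just established.

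The main potential obstacle is the case analysis in (a), which is routine but requires careful bookkeeping of the commutation relations in $\Sthreet$; the conceptual content is concentrated in the orthogonality relation above, which makes the projective-module connection $\nablah$ compatible with the intrinsic metric $h$. Once (a)--(c) are in place, the uniqueness theorem immediately yields $\nablah|_M = \nabla$.
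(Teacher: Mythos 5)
Your computations are sound, and in fact step (a) as you carried it out --- computing $\nablah_{\d_a}E_b$ componentwise and matching the result against the table in Proposition~\ref{prop:S3.pseudo.Riemannian.calculus} --- \emph{is} the paper's entire proof: once $\nablah_{\d_a}E_b=\nabla_{\d_a}E_b$ for all nine pairs $(a,b)$, the two connections agree on all of $M$ by $\reals$-linearity in $d$ and the Leibniz rule, and no appeal to uniqueness is needed. Your orthogonality observation (that $V\in T\Sthreet$ iff $\sum_i X^iV^i=0$, hence $(\mid-\P)W$ is $\bar h$-orthogonal to $T\Sthreet$ and $\nablah$ is metric for $h=\bar h|_M$) is a pleasant conceptual supplement that the paper does not spell out.

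However, the uniqueness-based framing, read on its own terms (i.e.\ with (a) only establishing $\nablah_d U\in M$ rather than equality with $\nabla_d U$), has a gap: the uniqueness theorem applies to connections making $(M,h,\gphi,\nabla)$ a pseudo-Riemannian calculus, which by definition is a metric and torsion-free \emph{real connection calculus}. Your checklist (a)--(c) omits the realness condition $h(\nablah_d E,E')^\ast=h(\nablah_d E,E')$ for $E,E'\in\Mphi$, and this condition is not decorative in the noncommutative setting: in the proof of the uniqueness theorem, metricity and torsion-freeness alone only show that $h\paraa{\alpha(d_1,E_2),E_3}$ (with $\alpha=\nablah-\nabla$) is anti-hermitian, and it is precisely the hermiticity hypothesis \eqref{alpha.hermitian} that forces it to vanish. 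The gap is easy to close: your explicit computations give $\nablah_{\d_a}E_b=E_c\Gamma^c_{ab}$ with $\Gamma^c_{ab}$ central and hermitian (polynomials in $\Zsq,\Wsq$), so $h(\nablah_{\d_a}E_b,E_c)=\Gamma^r_{ab}h_{rc}$ is hermitian --- but at that point you have effectively completed the direct comparison with $\nabla_{\d_a}E_b$, and the detour through the uniqueness theorem becomes superfluous.
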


\begin{proof}
  The proof is easily done by a straightforward computation, where one
  computes $\nablah_aE_b$ for $a,b=1,2,3$, and compares it with the
  result in Proposition~\ref{prop:S3.pseudo.Riemannian.calculus}. For
  instance,
  \begin{align*}
    \nablah_1E_1 &= \P\paraa{(-\d_1X^2,\d_1X^1,0,0)}
    =\P\paraa{(-X^1,-X^2,0,0)} \\
    &= (-X^1,-X^2,0,0)-(X^1,X^2,X^3,X^4)\paraa{-(X^1)^2-(X^2)^2}\\
    &=\paraa{X^1(\Zsq-\mid),X^2(\Zsq-\mid),X^3\Zsq,X^4\Zsq}\\
    &=\paraa{-X^1\Wsq,-X^2\Wsq,X^3\Zsq,X^4\Zsq} = -E_3,
  \end{align*}
  which coincides with $\nabla_1E_1$.
\end{proof}

\noindent 
In order to take the analogy with localization one step further, let us
introduce a localized algebra $\Sthreetloc$ constructed by formally
adjoining the inverses of $\Zsq$ and $\Wsq$ to the algebra
$\Sthreet$. More precisely, the multiplicative set $S$ generated by
$\Zsq,\Wsq,\mid$ trivially satisfies the (right and left) Ore condition
(since it consists of central elements) and the fact that $\Zsq,\Wsq$
are regular elements (cf. Proposition~\ref{prop:Sthree.zero.div}) implies
that the Ore localization at $S$ exists (see e.g. \cite{c:skewFields}). If we consider $T\Sthreet$
and $M$ as (right) $\Sthreetloc$-modules, they coincide, which we show
by explicitly finding a relation between the two sets of generators.

\begin{proposition}
  Consider the following elements of $(\Sthreetloc)^4$:
  \begin{alignat*}{2}
    &F_1= (-X^4,X^3,-qX^2,qX^1) & &E_1 = (-X^2,X^1,0,0)\\
    &F_2= (-X^3,-X^4,qX^1,qX^2) & &E_2 = (0,0,-X^4,X^3)\\
    &F_3=(-X^2,X^1,X^4,-X^3) &\qquad &E_3 = (X^1\Wsq, X^2\Wsq, -X^3\Zsq, -X^4\Zsq).
  \end{alignat*}
  Then it holds that
  \begin{align*}
    F_1 &= E_1|Z|^{-2}\paraa{X^1X^3+X^2X^4}
    +E_2|W|^{-2}\paraa{X^1X^3+X^2X^4}\\
    &\qquad\quad+E_3|Z|^{-2}|W|^{-2}\paraa{X^2X^3-X^1X^4}\\
    F_2 &=E_1|Z|^{-2}\paraa{X^2X^3-X^1X^4}
    +E_2|W|^{-2}\paraa{X^2X^3-X^1X^4}\\
    &\qquad\quad-E_3|Z|^{-2}|W|^{-2}\paraa{X^1X^3+X^2X^4}\\
    F_3 &=E_1-E_2.
  \end{align*}
\end{proposition}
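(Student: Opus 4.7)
The plan is to verify all three identities by direct componentwise computation in $\Sthreetloc$, exploiting the fact that $\Zsq$ and $\Wsq$ (being central in $\Sthreet$ by Proposition~\ref{prop:Sthree.zero.div}) have inverses that are also central in the Ore localization $\Sthreetloc$, so the scalar factors $|Z|^{-2}, |W|^{-2}, |Z|^{-2}|W|^{-2}$ may be freely pulled out of any product.

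The easiest identity is $F_3 = E_1 - E_2$, which follows at once from the explicit formulas: subtracting the coordinate expressions gives
\begin{align*}
E_1 - E_2 = (-X^2, X^1, 0, 0) - (0, 0, -X^4, X^3) = (-X^2, X^1, X^4, -X^3) = F_3.
\end{align*}

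For $F_1$ and $F_2$, I would introduce the shorthand $A = X^1X^3 + X^2X^4$ and $B = X^2X^3 - X^1X^4$ and check the four coordinates of each claimed equation separately. In the first and second coordinates, the relevant prefactors are $-X^2$ or $X^1$ (for $F_1$) and analogous expressions for $F_2$; these left-multiply $A$ and $B$, which begin with $X^1$ or $X^2$, so only the relations $[X^1,X^2]=0$ and the identity $(X^1)^2+(X^2)^2 = \Zsq$ are needed. For instance, in the first coordinate of $F_1$ one obtains
\begin{align*}
|Z|^{-2}\bracket{-X^2 A + X^1 B} = |Z|^{-2}\bracket{-(X^1)^2 X^4 - (X^2)^2 X^4} = -X^4,
\end{align*}
which matches $F_1^1$, and the second coordinate works symmetrically.

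The main obstacle, and the place where the parameter $q$ enters the final answer, is the third and fourth coordinates: here the prefactors $-X^4, X^3$ (from $E_2$) and $\pm X^3\Zsq, \pm X^4\Zsq$ (from $E_3$) left-multiply $A$ and $B$, producing commutators of the form $X^iX^jX^k$ with $i \in \{3,4\}$ and $j,k \in \{1,2,3,4\}$ that cannot be simplified using only $[X^1,X^2]=0$ and $[X^3,X^4]=0$. This is precisely where Lemma~\ref{lemma:X.lemma} is designed to be used: the four identities \eqref{eq:X.lemma.1}--\eqref{eq:X.lemma.4} allow one to swap the order of $X^1X^3, X^2X^4, X^1X^4, X^2X^3$ at the cost of factors of $q$ or $\qb$. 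After applying these identities, the $X^3$ and $X^4$ factors combine through $(X^3)^2 + (X^4)^2 = \Wsq$ to cancel against $|W|^{-2}$ (or to combine with $\Zsq$ and $|Z|^{-2}$), yielding the expected coordinates $-qX^2, qX^1$ of $F_1$ and $qX^1, qX^2$ of $F_2$. The appearance of $q$ rather than $\qb$ in these coordinates traces back to the asymmetric sign choices in $B$ and the particular identities \eqref{eq:X.lemma.1}--\eqref{eq:X.lemma.4} that get invoked; tracking which identity applies in which coordinate is the only real bookkeeping in the argument. The computation for $F_2$ is completely parallel, with the roles of $A$ and $B$ essentially interchanged.
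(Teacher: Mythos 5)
Your proposal is correct and follows essentially the same route as the paper: componentwise verification using the centrality of $\Zsq,\Wsq$ (and their inverses in $\Sthreetloc$), the relations $[X^1,X^2]=[X^3,X^4]=0$ for the first two coordinates, and Lemma~\ref{lemma:X.lemma} (specifically \eqref{eq:X.lemma.1} and \eqref{eq:X.lemma.4}) to reorder factors in the third and fourth coordinates before cancelling $\Wsq$ against $|W|^{-2}$. The paper writes out only the $F_1$ computation and leaves $F_2$, $F_3$ as analogous, whereas you sketch all three; this is an organizational difference, not a mathematical one.
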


\begin{proof}
  Let us show that $F_1$ can be written as a linear combination of
  $E_1,E_2,E_3$, as given in the statement. Namely, introducing $W^i$ through
  \begin{align*}
    e_iW^i = E_1&|Z|^{-2}\paraa{X^1X^3+X^2X^4}
    +E_2|W|^{-2}\paraa{X^1X^3+X^2X^4}\\
    &+E_3|Z|^{-2}|W|^{-2}\paraa{X^2X^3-X^1X^4},
  \end{align*}
  gives
  \begin{align*}
    W^1 &= -X^2|Z|^{-2}(X^1X^3+X^2X^4)+X^1|Z|^{-2}(X^2X^3-X^1X^4)\\
    W^2 &= X^1|Z|^{-2}(X^1X^3+X^2X^4)+X^2|Z|^{-2}(X^2X^3-X^1X^4)\\
    W^3 &= -X^4|W|^{-2}(X^1X^3+X^2X^4)-X^3|W|^{-2}(X^2X^3-X^1X^4)\\
    W^4 &= X^3|W|^{-2}(X^1X^3+X^2X^4)-X^4|W|^{-2}(X^2X^3-X^1X^4).
  \end{align*}
  Using the fact that $[X^1,X^2]=0$ (in $W^1,W^2$), together with
  \eqref{eq:X.lemma.1} and \eqref{eq:X.lemma.4} (in $W^3,W^4$), yields
  \begin{align*}
    W^1 &= -|Z|^{-2}\paraa{(X^2)^2+(X^1)^2}X^4=-|Z|^{-2}\Zsq X^4 = -X^4\\
    W^2 &= |Z|^{-2}\paraa{(X^1)^2+(X^2)^2}X^3=|Z|^{-2}\Zsq X^3=X^3\\
    W^3 &= -q|W|^{-2}\paraa{(X^4)^2+(X^3)^2}X^2=-q|W|^{-2}\Wsq X^2=-qX^2\\
    W^4 &= q|W|^{-2}\paraa{(X^4)^2+(X^3)^2}X^1=q|W|^{-2}\Wsq X^1=qX^1,
  \end{align*}
  which shows that $e_iW^i=F_1$.
\end{proof}

\noindent
Finally, we note that the metric 
\begin{align*}
  \paraa{h_{ab}}=
  \begin{pmatrix}
    \Zsq & 0 & 0 \\
    0 & \Wsq & 0 \\
    0 & 0 & \Zsq\Wsq
  \end{pmatrix}
\end{align*}
is invertible in $\Sthreetloc$, giving a local calculus in almost complete
analogy with differential geometry.

\section*{Acknowledgment}

\noindent We would like to thank L. Dabrowski, M. Khalkhali, G. Landi,
and A. Sitarz for interesting and useful discussions during the HIM
trimester program ``Noncommutative Geometry and its Applications'' in
the fall of 2014, as well as the University of Western Ontario
  for hospitality. Furthermore, J.A. is supported by the Swedish
Research Council.

\bibliographystyle{alpha}
\bibliography{sphere_curvature}  

\newcommand{\etalchar}[1]{$^{#1}$}
\def\polhk#1{\setbox0=\hbox{#1}{\ooalign{\hidewidth
  \lower1.5ex\hbox{`}\hidewidth\crcr\unhbox0}}}
\begin{thebibliography}{DVMMM96}

\bibitem[ABH{\etalchar{+}}09]{abhhs:noncommutative}
J.~Arnlind, M.~Bordemann, L.~Hofer, J.~Hoppe, and H.~Shimada.
\newblock Noncommutative {R}iemann surfaces by embeddings in {$\Bbb R\sp 3$}.
\newblock {\em Comm. Math. Phys.}, 288(2):403--429, 2009.

\bibitem[AC10]{ac:ncgravitysolutions}
P.~Aschieri and L.~Castellani.
\newblock Noncommutative gravity solutions.
\newblock {\em J. Geom. Phys.}, 60(3):375--393, 2010.

\bibitem[AH14]{ah:pseudoRiemannian}
J.~Arnlind and G.~Huisken.
\newblock Pseudo-{R}iemannian geometry in terms of multi-linear brackets.
\newblock {\em Lett. Math. Phys.}, 104(12):1507--1521, 2014.

\bibitem[AHH12]{ahh:multilinear}
J.~Arnlind, J.~Hoppe, and G.~Huisken.
\newblock Multi-linear formulation of differential geometry and matrix
  regularizations.
\newblock {\em J. Differential Geom.}, 91(1):1--39, 2012.

\bibitem[Arn14]{a:curvatureGeometric}
J.~Arnlind.
\newblock Curvature and geometric modules of noncommutative spheres and tori.
\newblock {\em J. Math. Phys.}, 55:041705, 2014.

\bibitem[BM11]{bm:starCompatibleConnections}
E.~J. Beggs and S.~Majid.
\newblock {$*$}-compatible connections in noncommutative {R}iemannian geometry.
\newblock {\em J. Geom. Phys.}, 61(1):95--124, 2011.

\bibitem[CFF93]{cff:gravityncgeometry}
A.~H. Chamseddine, G.~Felder, and J.~Fr{\"o}hlich.
\newblock Gravity in noncommutative geometry.
\newblock {\em Comm. Math. Phys.}, 155(1):205--217, 1993.

\bibitem[CM14]{cm:modularCurvature}
A.~Connes and H.~Moscovici.
\newblock Modular curvature for noncommutative two-tori.
\newblock {\em J. Amer. Math. Soc.}, 27(3):639--684, 2014.

\bibitem[Coh95]{c:skewFields}
P.~M. Cohn.
\newblock {\em Skew fields}, volume~57 of {\em Encyclopedia of Mathematics and
  its Applications}.
\newblock Cambridge University Press, Cambridge, 1995.
\newblock Theory of general division rings.

\bibitem[Con80]{c:cstaralgebre}
A.~Connes.
\newblock {$C^{\ast} $} alg\`ebres et g\'eom\'etrie diff\'erentielle.
\newblock {\em C. R. Acad. Sci. Paris S\'er. A-B}, 290(13):A599--A604, 1980.

\bibitem[Con94]{c:ncgbook}
A.~Connes.
\newblock {\em Noncommutative geometry}.
\newblock Academic Press, Inc., San Diego, CA, 1994.

\bibitem[CT11]{ct:gaussBonnet}
A.~Connes and P.~Tretkoff.
\newblock The {G}auss-{B}onnet theorem for the noncommutative two torus.
\newblock In {\em Noncommutative geometry, arithmetic, and related topics},
  pages 141--158. Johns Hopkins Univ. Press, Baltimore, MD, 2011.

\bibitem[DV88]{dv:calculDifferentiel}
M.~Dubois-Violette.
\newblock D\'erivations et calcul diff\'erentiel non commutatif.
\newblock {\em C. R. Acad. Sci. Paris S\'er. I Math.}, 307(8):403--408, 1988.

\bibitem[DVMMM96]{dvmmm:onCurvature}
M.~Dubois-Violette, J.~Madore, T.~Masson, and J.~Mourad.
\newblock On curvature in noncommutative geometry.
\newblock {\em J. Math. Phys.}, 37(8):4089--4102, 1996.

\bibitem[FK12]{fk:gaussBonnet}
F.~Fathizadeh and M.~Khalkhali.
\newblock The {G}auss-{B}onnet theorem for noncommutative two tori with a
  general conformal structure.
\newblock {\em J. Noncommut. Geom.}, 6(3):457--480, 2012.

\bibitem[FK13]{fk:scalarCurvature}
F.~Fathizadeh and M.~Khalkhali.
\newblock Scalar curvature for the noncommutative two torus.
\newblock {\em J. Noncommut. Geom.}, 7(4):1145--1183, 2013.

\bibitem[FK15]{fk:curvatureFourTori}
F.~Fathizadeh and M.~Khalkhali.
\newblock Scalar curvature for noncommutative four-tori.
\newblock {\em J. Noncommut. Geom.}, 9(2):473--503, 2015.

\bibitem[LM15]{lm:modularCurvature}
M~Lesch and H.~Moscovici.
\newblock Modular curvature and {M}orita equivalence.
\newblock \texttt{arXiv:1505.00964}, 2015.

\bibitem[Mat91a]{m:ncspheres}
K.~Matsumoto.
\newblock Noncommutative three-dimensional spheres.
\newblock {\em Japan. J. Math. (N.S.)}, 17(2):333--356, 1991.

\bibitem[Mat91b]{m:ncspheresII}
K.~Matsumoto.
\newblock Noncommutative three-dimensional spheres. {II}. {N}oncommutative
  {H}opf fibering.
\newblock {\em Yokohama Math. J.}, 38(2):103--111, 1991.

\bibitem[Ros13]{r:leviCivita}
J.~Rosenberg.
\newblock {L}evi-{C}ivita's theorem for noncommutative tori.
\newblock {\em SIGMA}, 9:071, 2013.

\end{thebibliography}

\end{document}